\title[A generalization of the concept of distance]{A generalization of the concept of distance based on the simplex inequality}\thanks{Corresponding author: Bruno Teheux is with the Mathematics Research Unit, University of Luxembourg, Maison du Nombre, 6, avenue de la Fonte, L-4364 Esch-sur-Alzette, Luxembourg. Email: bruno.teheux[at]uni.lu}
\author{Gergely Kiss}
\address{Mathematics Research Unit, University of Luxembourg, Maison du Nombre, 6, avenue de la Fonte, L-4364 Esch-sur-Alzette, Luxembourg}
\email{gergely.kiss[at]uni.lu}
\author{Jean-Luc Marichal}
\address{Mathematics Research Unit, University of Luxembourg, Maison du Nombre, 6, avenue de la Fonte, L-4364 Esch-sur-Alzette, Luxembourg}
\email{jean-luc.marichal[at]uni.lu}
\author{Bruno Teheux}
\address{Mathematics Research Unit, University of Luxembourg, Maison du Nombre, 6, avenue de la Fonte, L-4364 Esch-sur-Alzette, Luxembourg}
\email{bruno.teheux[at]uni.lu}
\date{January 11, 2018}
\begin{document}

\theoremstyle{plain}
\newtheorem{theorem}{Theorem}[section]% Supprimer [section] pour une numérotation linéaire
\newtheorem{lemma}[theorem]{Lemma}
\newtheorem{proposition}[theorem]{Proposition}
\newtheorem{corollary}[theorem]{Corollary}
\newtheorem{fact}[theorem]{Fact}
\newtheorem*{main}{Main Theorem}

\theoremstyle{definition}
\newtheorem{definition}[theorem]{Definition}
\newtheorem{example}[theorem]{Example}
\newtheorem{algorithm}{Algorithm}

\theoremstyle{remark}
\newtheorem*{conjecture}{Conjecture}
\newtheorem{remark}{Remark}
\newtheorem{claim}{Claim}

\newenvironment{disarray}%
 {\everymath{\displaystyle\everymath{}}\array}%
 {\endarray}

\newcommand{\N}{\mathbb{N}}
\newcommand{\R}{\mathbb{R}}
\newcommand{\Q}{\mathbb{Q}}
\newcommand{\Vspace}{\vspace{2ex}}
\newcommand{\bfx}{\mathbf{x}}
\newcommand{\bfy}{\mathbf{y}}
\newcommand{\bfz}{\mathbf{z}}
\newcommand{\me}{\mathrm{\mathbf{m}}}

\begin{abstract}
We introduce and discuss the concept of \emph{$n$-distance}, a generalization to $n$ elements of the classical notion of distance obtained by replacing the triangle inequality with the so-called simplex inequality
\[
d(x_1, \ldots, x_n)~\leq~K\, \sum_{i=1}^n d(x_1, \ldots, x_n)_i^z{\,}, \qquad x_1, \ldots, x_n, z \in X,
\]
where $K=1$. Here $d(x_1,\ldots,x_n)_i^z$ is obtained from the function $d(x_1,\ldots,x_n)$ by setting its $i$th variable to $z$. We provide several examples of $n$-distances, and for each of them we investigate the infimum of the set of real numbers $K\in\left]0,1\right]$ for which the inequality above holds. We also introduce a generalization of the concept of $n$-distance obtained by replacing in the simplex inequality the sum function with an arbitrary symmetric function.
\end{abstract}

\keywords{$n$-distance, simplex inequality, Fermat point, smallest enclosing sphere}

\subjclass[2010]{Primary 39B72; Secondary 26D99}

\maketitle

%----------------------------------------------------------------------------------------------
\section{Introduction}

The notion of metric space, as first introduced by Fr\'echet \cite{Frechet1906} and later developed by Hausdorff \cite{Hausdorff1914}, is one of the key ingredients in many areas of pure and applied mathematics, particularly in analysis, topology, geometry, statistics, and data analysis.

Denote the half-line $\left[0,+\infty\right[$ by $\R_+$. Recall that a \emph{metric space} is a pair $(X,d)$, where $X$ is a nonempty set and $d$ is a distance on $X$, that is, a function $d\colon X^2\to\R_+$ satisfying the following conditions:

\begin{itemize}
\item $d(x_1,x_2) \leq d(x_1,z)+d(z,x_2)$ for all $x_1,x_2,z\in X$ (triangle inequality),
\item $d(x_1,x_2)=d(x_2,x_1)$ for all $x_1,x_2\in X$ (symmetry),
\item $d(x_1,x_2)=0$ if and only if $x_1=x_2$ (identity of indiscernibles).
\end{itemize}

Generalizations of the concept of distance in which $n\geq 3$ elements are considered have been investigated by several authors (see, e.g., \cite[Chapter 3]{Deza2014} and the references therein). The three conditions above may be generalized to $n$-variable functions $d\colon X^n\to\R_+$ in the following ways. For any integer $n\geq 1$, we set $[n]=\{1,\ldots,n\}$. For any $i\in [n]$ and any $z\in X$, we denote by $d(x_1,\ldots,x_n)_i^z$ the function obtained from $d(x_1,\ldots,x_n)$ by setting its $i$th variable to $z$. Let also denote by $S_n$ the set of all permutations on $[n]$. A function $d\colon X^n \to \R_+$ is said to be an \emph{$(n-1)$-semimetric} \cite{Deza2000} if it satisfies

\begin{enumerate}
\item[(i)] $d(x_1,\ldots,x_n)\leq \sum_{i=1}^n d(x_1,\ldots,x_n)_i^z$ for all $x_1,\ldots,x_n,z\in X$,

\item[(ii)] $d(x_1,\ldots,x_n)=d(x_{\pi(1)},\ldots,x_{\pi(n)})$ for all $x_1,\ldots,x_n\in X$ and all $\pi\in S_n$,
\end{enumerate}
and it is said to be an \emph{$(n-1)$-hemimetric} \cite{Deza2014,Deza2003} if additionally it satisfies
\begin{enumerate}
\item[(iii')] $d(x_1,\ldots,x_n)=0$ if and only if $x_1,\ldots,x_n$ are not pairwise distinct.
\end{enumerate}

Condition (i) is referred to as the \emph{simplex inequality} \cite{Deza2014,Deza2000}. For $n=3$, this inequality can be interpreted as follows: the area of a triangle face of a tetrahedron does not exceed the sum of the areas of the remaining three faces.

The following variant of condition (iii') can also be naturally considered:

\begin{enumerate}
\item[(iii)] $d(x_1, \ldots, x_n)=0$ if and only if $x_1=\cdots=x_n$.
\end{enumerate}

For $n=3$, functions satisfying conditions (i), (ii), and (iii) were introduced by Dhage~\cite{Dhage1992} and called \emph{$D$-distances}. Their topological properties were investigated subsequently \cite{Dhage1994,Dhage1994bis,Dhage2000}, but unfortunately most of the claimed results are incorrect, see \cite{Mustafa2004}. Moreover, it turned out that a stronger version of $D$-distance is needed for a sound topological use of these functions \cite{Mustafa2004,Mustafa2006,Khamsi2015}.

In this paper we introduce and discuss the following simultaneous generalization of the concepts of distance and $D$-distance by considering functions with $n\geq 2$ arguments.

\begin{definition}[{see \cite{KisMarTeh16}}]\label{de:n-dist}
Let $n\geq 2$ be an integer. We say that $(X,d)$ is an \emph{$n$-metric space} if $X$ is a nonempty set and $d$ is an \emph{$n$-distance} on $X$, that is, a function $d\colon X^n\to\R_+$ satisfying conditions (i), (ii), and (iii).
\end{definition}

We observe that for any $n$-distance $d\colon X^n \to \R_+$, the set of real numbers $K\in\left]0,1\right]$ for which the condition
\begin{equation}\label{eq:ntr}
d(x_1, \ldots, x_n)~\leq~K\, \sum_{i=1}^n d(x_1, \ldots, x_n)_i^z{\,},\qquad x_1,\ldots,x_n,z\in X,
\end{equation}
holds has an infimum $K^*$. We call it the \emph{best constant} associated with the $n$-distance $d$. Determining the value of $K^*$ for a given $n$-distance is an interesting problem that might be mathematically challenging. It is the purpose of this paper to provide natural examples of $n$-distances and to show how elegant the investigation of the values of the best constants might be.

It is worth noting that determining the best constant $K^*$ is not relevant for nonconstant $(n-1)$-hemimetrics because we always have $K^*=1$ for those functions. Indeed, we have
$$
0 ~<~ d(x_1, \ldots, x_n) ~=~ \sum_{i=1} ^ n d(x_1, \ldots, x_n)_i^{x_n}
$$
for any pairwise distinct elements $x_1, \ldots, x_n$ of $X$.

The paper is organized as follows. In Section 2 we provide some basic properties of $n$-metric spaces as well as some examples of $n$-distances together with their corresponding best constants. In Section 3 we investigate the values of the best constants for Fermat point based $n$-distances and discuss the particular case of median graphs. In Section 4 we consider some geometric constructions (smallest enclosing sphere and number of directions) to define $n$-distances and study their corresponding best constants. In Section 5 we introduce a generalization of the concept of $n$-distance by replacing in condition (i) the sum function with an arbitrary symmetric $n$-variable function. Finally, in Section 6 we conclude the paper by proposing topics for further research.

\begin{remark}\label{rem:multdist}
A \emph{multidistance} on $X$, as introduced by Mart\'{\i}n and Mayor \cite{MarMay11}, is a function $d\colon\bigcup_{n\geqslant 1}X^n\to\R_+$ such that, for every integer $n\geq 1$, the restriction of $d$ to $X^n$ satisfies conditions (ii), (iii), and
\begin{enumerate}
\item[(i')] \label{it:multi03} $d(x_1,\ldots,x_n) \leqslant \sum_{i=1}^n d(x_i,z)$ for all $x_1,\ldots,x_n,z\in X$.
\end{enumerate}
Properties of multidistances as well as instances including the Fermat multidistance and smallest enclosing ball multidistances have been investigated for example in \cite{MarMay10,MarMay11,MarMayVal11,AguMarMaySunVal12}. Note that multidistances have an indefinite number of arguments whereas $n$-distances have a fixed number of arguments. In particular, an $n$-distance can be defined without referring to any given $2$-distance. Interestingly, some of the $n$-distances we present in this paper cannot be constructed from the concept of multidistance (see Section 6).
\end{remark}

%----------------------------------------------------------------------------------------------
\section{Basic examples and general properties of $n$-distances}

Let us illustrate the concept of $n$-distance by giving a few elementary examples. Other classes of $n$-distances will be investigated in the next sections. We denote by $|E|$ the cardinalily of any set $E$.

\begin{example}[Drastic $n$-distance]\label{ex:dra}
For every integer $n\geqslant 2$, the map $d\colon X^n\to\R_+$ defined by $d(x_1,\ldots,x_n)=0$, if $x_1=\cdots =x_n$, and $d(x_1,\ldots,x_n)=1$, otherwise, is an $n$-distance on $X$ for which the best constant is $K_n^*=\frac{1}{n-1}$. Indeed, let $x_1,\ldots,x_n,z\in X$ and assume that $d(x_1,\ldots,x_n)=1$. If there exists $k\in [n]$ such that $x_i=x_j\neq x_k$ for all $i,j\in [n]\setminus\{k\}$, then we have
$$
\sum_{i=1}^n d(x_1, \ldots, x_n)_i^z ~=~
\begin{cases}
n-1, & \text{if $z\in\{x_1,\ldots,x_n\}\setminus\{x_k\}$},\\
n, & \text{otherwise}.
\end{cases}
$$
In all other cases we have $\sum_{i=1}^n d(x_1, \ldots, x_n)_i^z=n$.\qed
\end{example}

\begin{example}[Cardinality based $n$-distance]\label{ex:car}
For every integer $n\geqslant 2$, the map $d\colon X^n\to\R_+$ defined by
$$
d(x_1,\ldots,x_n) ~=~ |\{x_1,\ldots,x_n\}|-1
$$
is an $n$-distance on $X$ for which the best constant is $K_n^*=\frac{1}{n-1}$. Indeed, let $x_1,\ldots,x_n,z\in X$ and assume that $d(x_1,\ldots,x_n)\geq 1$. The case $n=2$ is trivial. So let us further assume that $n\geq 3$. For every $i\in [n]$, set $m_i=|\{j\in [n]\mid x_j=x_i\}|$. If $|\{x_1,\ldots,x_n\}|<n$ (which means that there exists $j\in [n]$ such that $m_j\geq 2$), then it is straightforward to see that
\begin{eqnarray*}
\sum_{i=1}^n d(x_1, \ldots, x_n)_i^z &\geq & n{\,}d(x_1,\ldots,x_n) - |\{i\in [n]\mid m_i=1\}|\\
& \geq & (n-1){\,}d(x_1,\ldots,x_n),
\end{eqnarray*}
where the first inequality is an equality if and only if $z=x_j$ for some $j\in [n]$ such that $m_j\geq 2$, and the second inequality is an equality if and only if there is exactly one $j\in [n]$ such that $m_j\geq 2$. If $|\{x_1,\ldots,x_n\}|=n$, then
$$
\sum_{i=1}^n d(x_1, \ldots, x_n)_i^z ~\geq ~ (n-1){\,}d(x_1,\ldots,x_n),
$$
with equality if and only if $z\in\{x_1,\ldots,x_n\}$.\qed
\end{example}

\begin{example}[Diameter]\label{ex:max}
Given a metric space $(X,d)$ and an integer $n\geqslant 2$, the map $d_{\max}\colon X^n \to \R_+$ defined by
$$
d_{\max}(x_1,\ldots,x_n)~=~\max_{\{i,j\}\subseteq [n]}d(x_i,x_j)
$$
is an $n$-distance on $X$ for which we have $K_n^*=\frac{1}{n-1}$. Indeed, let $x_1,\ldots,x_n,z\in X$ and assume without loss of generality that $d_{\max}(x_1, \ldots, x_n)=d(x_1, x_2)$. For every $i\in [n]$ we have
\[
d_{\max}(x_1, \ldots, x_n)_i^z  ~\geq ~
\begin{cases}
d(x_2,z), & \text{if $i=1$},\\
d(x_1,z), & \text{if $i=2$},\\
d(x_1,x_2), & \text{otherwise}.
\end{cases}
\]
Using the triangle inequality, we then obtain
\begin{align*}
\sum_{i=1}^n d_{\max}(x_1, \ldots, x_n)_i^z & ~\geq ~ (n-2)\, d(x_1,x_2)+d(x_1,z)+d(x_2,z)\\
& ~\geq ~ (n-1)\, d(x_1,x_2)~=~(n-1)\, d_{\max}(x_1, \ldots, x_n),
\end{align*}
which proves that $K_n^* \leq \frac{1}{n-1}$. To prove that $K_n^*=\frac{1}{n-1}$, note that if $x_1=\cdots=x_{n-1}=z$ and $x_n\neq z$, then $\sum_{i=1}^n d_{\max}(x_1, \ldots, x_n)_i^z=(n-1)\,  d_{\max}(x_1, \ldots, x_n)$.\qed
\end{example}

\begin{example}[Sum based $n$-distance]\label{ex:sum} Given a metric space $(X,d)$ and an integer $n\geq 2$, the map $d_\Sigma\colon X^n \to \R_+$ defined by
$$
d_{\Sigma}(x_1, \ldots, x_n)~=~\sum_{\{i,j\}\subseteq [n]} d(x_i, x_j)
$$
is an $n$-distance on $X$ for which we have $K_n^*=\frac{1}{n-1}$. Indeed, for fixed $x_1, \ldots, x_n, z\in X$, we have
$$
\sum_{i=1}^n d_\Sigma(x_1, \ldots, x_n)_i^z ~=~  (n-2)\, \sum_{\{i,j\}\subseteq [n]} d(x_i, x_j) + (n-1)\, \sum_{i=1}^n d(x_i,z).
$$
Using the triangle inequality we obtain
$$
(n-1)\, \sum_{i=1}^n d(x_i,z)~=~ \sum_{\{i,j\}\subseteq [n]} \big(d(x_i,z)+d(x_j,z)\big)~\geq~\sum_{\{i,j\}\subseteq [n]} d(x_i,x_j).
$$
Therefore, we finally obtain
$$
\sum_{i=1}^n d_\Sigma(x_1, \ldots, x_n)_i^z ~\geq~(n-1)\, \sum_{\{i,j\}\subseteq [n]} d(x_i, x_j)~=~(n-1)\, d_\Sigma(x_1, \ldots, x_n),
$$
which proves that $K_n^* \leq \frac{1}{n-1}$. To prove that $K_n^*=\frac{1}{n-1}$, note that if $x_1=\cdots=x_{n-1}=z$ and $x_n\neq z$, then $\sum_{i=1}^n d_{\Sigma}(x_1, \ldots, x_n)_i^z=(n-1)\,  d_{\Sigma}(x_1, \ldots, x_n)$.\qed
\end{example}

\begin{example}[Arithmetic mean based $n$-distance]\label{ex:AM}
For any integer $n\geq 2$, the map $d\colon \R^n \to \R_+$ defined by
\[
d(x_1, \ldots, x_n) ~=~ \frac{1}{n} \sum_{i=1}^n x_i - x_{(1)}~=~ \frac{1}{n} \sum_{i=1}^n (x_i - x_{(1)})\, ,
\]
where $x_{(1)}=\min\{x_1, \ldots, x_n\}$, is an $n$-distance on $\R$ for which $K_n^*=\frac{1}{n-1}$. Indeed, let $x_1, \ldots, x_n, z \in \R$. By symmetry of $d$ we may assume that $x_1\leq \cdots \leq x_n$. We then obtain
$$
d(x_1, \ldots, x_n)~=~\frac{1}{n}\,\Big(\sum_{i=1}^n x_i\Big)-x_1
$$
and
$$
\sum_{i=1}^n d(x_1, \ldots, x_n)_{i}^z~=~\big(1-\frac{1}{n}\big)\Big(\sum_{i=1}^n x_i\Big)+z-(n-1)\, \min\{x_1, z\}-\min\{x_2,z\}.
$$
It follows that condition \eqref{eq:ntr} holds for $K_n=\frac{1}{n-1}$ if and only if
$$
(n-1)(x_1-\min\{x_1,z\})+(z-\min\{x_2,z\})~\geq ~0.
$$
We then observe that this inequality is trivially satisfied, which proves that $K_n^* \leq \frac{1}{n-1}$. To prove that $K_n^*=\frac{1}{n-1}$, just take $x_1, \ldots, x_n, z\in\R$ so that $x_1< z< x_2=\cdots = x_n$.\qed
\end{example}

In the next result, we show how to construct an $(n-1)$-hemimetric from an $n$-distance.

\begin{proposition}
Let $(X,d)$ be an $n$-metric space for some integer $n\geq 2$. The function $d'\colon X^{n}\to \R_+$ defined as
\[
d'(x_1, \ldots, x_n)~=~
\begin{cases}
0, & \text{ if $x_1, \ldots, x_n$ are not pairwise distinct,}\\
d(x_1, \ldots, x_n), & \text{otherwise},
\end{cases}
\]
is an $(n-1)$-hemimetric.
\end{proposition}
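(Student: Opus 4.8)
The plan is to verify directly the three defining conditions of an $(n-1)$-hemimetric for $d'$, namely the symmetry condition (ii), the condition (iii'), and the simplex inequality (i). The first two are immediate; essentially all of the (very modest) work lies in the simplex inequality, and even there a two-case analysis settles it.

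First I would dispose of the easy conditions. Symmetry of $d'$ follows at once from the symmetry of $d$ together with the fact that the property ``$x_1,\ldots,x_n$ are pairwise distinct'' is invariant under permutations of the arguments, so both branches in the definition of $d'$ are permutation-invariant (and $d'$ clearly takes values in $\R_+$). For condition (iii'): if $x_1,\ldots,x_n$ are not pairwise distinct, then $d'(x_1,\ldots,x_n)=0$ by definition; conversely, if they are pairwise distinct, then since $n\geq 2$ they are in particular not all equal, so condition (iii) for $d$ yields $d'(x_1,\ldots,x_n)=d(x_1,\ldots,x_n)>0$.

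It then remains to prove $d'(x_1,\ldots,x_n)\leq\sum_{i=1}^n d'(x_1,\ldots,x_n)_i^z$ for all $x_1,\ldots,x_n,z\in X$. If $x_1,\ldots,x_n$ are not pairwise distinct, the left-hand side is $0$ and there is nothing to prove, so I would assume they are pairwise distinct, in which case the left-hand side equals $d(x_1,\ldots,x_n)$. Now split according to whether $z$ is one of the $x_i$. If $z\notin\{x_1,\ldots,x_n\}$, then for every $i\in[n]$ the tuple obtained from $(x_1,\ldots,x_n)$ by replacing $x_i$ with $z$ is again pairwise distinct, hence $d'(x_1,\ldots,x_n)_i^z=d(x_1,\ldots,x_n)_i^z$, and the desired inequality is precisely the simplex inequality (i) for the $n$-distance $d$. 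If instead $z=x_k$ for some (necessarily unique) $k\in[n]$, then the $k$-th summand is $d'(x_1,\ldots,x_n)_k^{x_k}=d'(x_1,\ldots,x_n)=d(x_1,\ldots,x_n)$, and since the remaining summands are nonnegative the whole sum is at least $d(x_1,\ldots,x_n)$, as required.

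The only step that calls for a moment's care---and the nearest thing to an obstacle---is the second case: one cannot simply quote the simplex inequality for $d$ there, because replacing $x_i$ by $z=x_k$ for $i\neq k$ produces a non--pairwise-distinct tuple, so the term $d'(x_1,\ldots,x_n)_i^z$ collapses to $0$ even when $d(x_1,\ldots,x_n)_i^z\neq 0$. The resolution, as indicated above, is that this collapse is harmless: the single surviving term (the $k$-th one) already equals the full left-hand side $d(x_1,\ldots,x_n)$, so the inequality holds, in fact with equality.
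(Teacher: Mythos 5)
Your proposal is correct and follows essentially the same argument as the paper: symmetry and (iii') are immediate, and the simplex inequality is handled by the same two-case split, quoting the simplex inequality for $d$ when $z\notin\{x_1,\ldots,x_n\}$ and observing that for $z=x_k$ the $k$-th summand alone already equals $d'(x_1,\ldots,x_n)$. No gaps.
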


\begin{proof}
It is easy to see that $d'$ satisfies conditions (ii) and (iii'). To see that condition (i) holds, let $x_1, \ldots, x_n, z \in X$ and assume that $d'(x_1, \ldots, x_n)>0$. If $d'(x_1, \ldots, x_n)_i^z=d(x_1, \ldots, x_n)_i^z$ for every $i\in [n]$, then the simplex inequality holds for $d'$. Otherwise, we must have $z\in\{x_1, \ldots, x_n\}$ and then $\sum_{i=1}^n d'(x_1, \ldots, x_n)_i^z=d'(x_1, \ldots, x_n)$. This shows that condition (i) holds.
\end{proof}

The next proposition shows that two of the standard constructions of distances from existing ones are still valid for $n$-distances. The proof uses the following lemma.

\begin{lemma}\label{lem:sum}
For any $a_1, \ldots, a_n, a\in \R_+$ such that $a\leq \sum_{i=1}^n a_i $, we have
\[
\frac{a}{1+a}~\leq ~ \sum_{i=1}^n \frac{a_i}{1+a_i}\, .
\]
\end{lemma}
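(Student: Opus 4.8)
The plan is to reduce the $n$-variable inequality to the single-variable fact that the map $t \mapsto t/(1+t)$ is nondecreasing and subadditive on $\R_+$, and then handle the sum by a telescoping/monotonicity argument. First I would record the two elementary properties of $\varphi(t) = t/(1+t)$ that do all the work: (a) $\varphi$ is nondecreasing on $\R_+$, since $\varphi(t) = 1 - \frac{1}{1+t}$; and (b) $\varphi$ is subadditive, i.e. $\varphi(s+t) \leq \varphi(s) + \varphi(t)$ for all $s,t \in \R_+$. Property (b) follows from the computation
\[
\varphi(s) + \varphi(t) - \varphi(s+t) ~=~ \frac{s}{1+s} + \frac{t}{1+t} - \frac{s+t}{1+s+t} ~\geq~ 0,
\]
which after clearing denominators reduces to a manifestly nonnegative expression in $s,t$ (the numerator is $st(2 + s + t) \geq 0$ over the positive common denominator). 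An immediate consequence, by induction on $n$, is that $\varphi\big(\sum_{i=1}^n a_i\big) \leq \sum_{i=1}^n \varphi(a_i)$ for all $a_1, \ldots, a_n \in \R_+$.

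Given these facts, the lemma is now essentially immediate. Since $a \leq \sum_{i=1}^n a_i$ and $\varphi$ is nondecreasing by (a), we get
\[
\frac{a}{1+a} ~=~ \varphi(a) ~\leq~ \varphi\Big(\sum_{i=1}^n a_i\Big),
\]
and then by the subadditivity consequence of (b),
\[
\varphi\Big(\sum_{i=1}^n a_i\Big) ~\leq~ \sum_{i=1}^n \varphi(a_i) ~=~ \sum_{i=1}^n \frac{a_i}{1+a_i},
\]
which chains together to give the desired inequality.

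I expect no serious obstacle here; the only point requiring a moment's care is the verification of subadditivity (b), which is the heart of the matter, and even that is a routine algebraic manipulation once the common denominator is cleared. An alternative route that avoids the two-step argument entirely is to prove directly, again by clearing denominators, that $\frac{\sum_i a_i}{1 + \sum_i a_i} \leq \sum_i \frac{a_i}{1+a_i}$: the difference, placed over the common positive denominator $(1+\sum_i a_i)\prod_i (1+a_i)$, has a numerator that expands into a sum of terms each of which is a product of distinct $a_i$'s (or such products times further $a_j$'s) with a nonnegative integer coefficient, hence is nonnegative. Either presentation is short; I would favor the $\varphi$-based one for transparency, since the monotonicity step is what uses the hypothesis $a \leq \sum_i a_i$ and the subadditivity step is what is actually being exploited when the lemma is applied to build $n$-distances of the form $d/(1+d)$.
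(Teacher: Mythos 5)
Your proof is correct. The subadditivity verification is right: clearing denominators in $\varphi(s)+\varphi(t)-\varphi(s+t)$ with $\varphi(t)=t/(1+t)$ indeed leaves the numerator $st(2+s+t)\geq 0$, and combining this (extended to $n$ terms by induction) with the monotonicity of $\varphi$ gives the lemma. Your route is organized differently from the paper's, though it rests on the same underlying two-variable facts. The paper proves the statement by a single induction on $n$ that carries the hypothesis $a\leq\sum_{i=1}^n a_i$ along: it sets $b=\max\{0,a-a_n\}$ and applies the induction hypothesis twice, once to $a\leq b+a_n$ (the two-variable case, left as ``easily obtained'') and once to $b\leq\sum_{i=1}^{n-1}a_i$, never isolating monotonicity and subadditivity as separate properties. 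Your presentation factors the argument cleanly: monotonicity of $\varphi$ is exactly what absorbs the hypothesis $a\leq\sum_i a_i$, and subadditivity of $\varphi$ (made explicit by the algebra the paper omits) does the rest. This is arguably more transparent about which property is used where, and it records the subadditivity of $t\mapsto t/(1+t)$ in a reusable form; the paper's version is slightly more compact, needing only one induction and no auxiliary function, at the cost of the somewhat opaque $\max\{0,a-a_n\}$ device.
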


\begin{proof}
We proceed by induction on $n\geq 1$. The result is easily obtained for $n\in \{1,2\}$. Assume that the result holds for $k\in \{1, \ldots, n-1\}$ for some $n\geq 3$, and that $a\leq \sum_{i=1}^{n} a_i$ for some $a, a_1, \ldots, a_n \in \R_+$. Letting $b=\max\{0, a-a_n\}$, we obtain
\[
\frac{a}{1+a}~\leq ~\frac{b}{1+b} + \frac{a_n}{1+a_n}~\leq ~\sum_{i=1}^n \frac{a_i}{1+a_i}\, ,
\]
where the first inequality is obtained by the induction hypothesis applied to $a\leq b+a_n$, and the second to $b\leq \sum_{i=1}^{n-1} a_i$.
\end{proof}

\begin{proposition}\label{prop:buit-d}
Let $d$ and $d'$ be $n$-distances on $X$ and let $\lambda >0$. The following assertions hold.
\begin{enumerate}
\item[(a)] $d+d'$ and $\lambda{\,}d$ are $n$-distances on $X$.
\item[(b)] $\frac{d}{1+d}$ is an $n$-distance on $X$, with values in $[0,1]$.
\end{enumerate}
\end{proposition}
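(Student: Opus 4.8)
The plan is to verify conditions (i), (ii), and (iii) for each of the three functions, the only nontrivial point being condition (i) for $\frac{d}{1+d}$, which will be handled by Lemma~\ref{lem:sum}.

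For assertion (a), symmetry (ii) and condition (iii) are immediate. A sum of symmetric functions is symmetric and a positive multiple of a symmetric function is symmetric, so $d+d'$ and $\lambda{\,}d$ satisfy (ii). Since $d,d'\geq 0$, we have $(d+d')(x_1,\ldots,x_n)=0$ if and only if $d(x_1,\ldots,x_n)=d'(x_1,\ldots,x_n)=0$, i.e. if and only if $x_1=\cdots=x_n$; and since $\lambda>0$, $(\lambda{\,}d)(x_1,\ldots,x_n)=0$ if and only if $x_1=\cdots=x_n$. For the simplex inequality (i), it suffices to note that $(d+d')(x_1,\ldots,x_n)_i^z=d(x_1,\ldots,x_n)_i^z+d'(x_1,\ldots,x_n)_i^z$ and $(\lambda{\,}d)(x_1,\ldots,x_n)_i^z=\lambda{\,}d(x_1,\ldots,x_n)_i^z$, so the simplex inequalities for $d+d'$ and $\lambda{\,}d$ follow by summing the simplex inequalities for $d$ and $d'$, respectively multiplying the one for $d$ by $\lambda$.

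For assertion (b), write $f(t)=\frac{t}{1+t}$ for $t\in\R_+$, so that $\frac{d}{1+d}=f\circ d$ and $(f\circ d)(x_1,\ldots,x_n)_i^z=f\big(d(x_1,\ldots,x_n)_i^z\big)$, since substituting a value for a variable commutes with postcomposition by $f$. As $0\leq f(t)<1$ for all $t\in\R_+$, the function $f\circ d$ has values in $[0,1]$. Symmetry (ii) of $f\circ d$ follows from that of $d$, and since $f(t)=0$ if and only if $t=0$, condition (iii) for $f\circ d$ is equivalent to condition (iii) for $d$. It remains to check the simplex inequality. Fix $x_1,\ldots,x_n,z\in X$ and put $a=d(x_1,\ldots,x_n)$ and $a_i=d(x_1,\ldots,x_n)_i^z$ for $i\in[n]$. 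Condition (i) for $d$ gives $a\leq\sum_{i=1}^n a_i$, so Lemma~\ref{lem:sum} applies and yields
\[
(f\circ d)(x_1,\ldots,x_n)~=~\frac{a}{1+a}~\leq~\sum_{i=1}^n\frac{a_i}{1+a_i}~=~\sum_{i=1}^n (f\circ d)(x_1,\ldots,x_n)_i^z,
\]
which is condition (i) for $f\circ d$.

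There is essentially no obstacle: the substance of part (b) is contained entirely in Lemma~\ref{lem:sum}, and the only thing worth making explicit is that its hypothesis $a\leq\sum_{i=1}^n a_i$ is precisely the simplex inequality for $d$ evaluated at the chosen tuple $(x_1,\ldots,x_n,z)$; part (a) is routine.
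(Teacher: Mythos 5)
Your proof is correct and follows the paper's own route: part (a) by direct verification of (i)--(iii), and part (b) by applying Lemma~\ref{lem:sum} with $a=d(x_1,\ldots,x_n)$ and $a_i=d(x_1,\ldots,x_n)_i^z$, which is exactly how the paper handles the only nontrivial point. You simply spell out the "simple verification" that the paper leaves implicit.
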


\begin{proof}
(a) is a simple verification. For (b) we note that condition (i) holds for $\frac{d}{1+d}$ by Lemma~\ref{lem:sum}.
\end{proof}

%\begin{example}
%For any integer $n\geq 2$, we know from Example~\ref{ex:AM} that the map $d\colon\R^n\to\R_+$ defined by
%$$
%d(x_1,\ldots,x_n)~=~\frac{1}{n}\sum_{i=1}^nx_i-x_{(1)}
%$$
%is an $n$-distance on $\R$, where $x_{(1)}=\min\{x_1,\ldots,x_n\}$. Also, by Example~\ref{ex:max} the map $d'\colon\R^n\to\R_+$ defined by $d'(x_1,\ldots,x_n)=x_{(n)}-x_{(1)}$ is an $n$-distance on $\R$, where $x_{(n)}=\max\{x_1,\ldots,x_n\}$. By Proposition~\ref{prop:buit-d} (;;; false ;;;), we immediately see that the map $d''=d'-d$, that is,
%$$
%d''(x_1,\ldots,x_n)~=~x_{(n)}-\frac{1}{n}\sum_{i=1}^nx_i
%$$
%is also an $n$-distance on $\R$. It is not difficult to see that the best constant for $d''$ is again $K_n^*=\frac{1}{n-1}$.\qed
%\end{example}

\begin{remark}
In the same spirit as Proposition~\ref{prop:buit-d} we observe that if $d\colon X\to \R_+$ is an $n$-distance and $d_0\colon X\to \R_+$  is an $(n-1)$-hemimetric, then $d+d_0$ is an $n$-distance.
\end{remark}

%----------------------------------------------------------------------------------------------
\section{Fermat point based $n$-distances}\label{sec:fermat}

Recall that, given a metric space $(X,d)$ and an integer $n\geq 2$, the \emph{Fermat set} $F_Y$ of any $n$-element subset $Y=\{x_1, \ldots, x_n\}$ of $X$ is defined as
\[
F_Y~=~\Big\{x \in X ~\Big|~ \sum_{i=1}^n d(x_i,x)~\leq ~ \sum_{i=1}^n d(x_i,z) \text{ for all } z \in X\Big\}.
\]
Elements of $F_Y$ are the \emph{Fermat points of $Y$}. The problem of finding the Fermat point of a triangle in the Euclidean plane was formulated by Fermat in the early 17th century, and was first solved by Torricelli around 1640. The general problem stated for $n\geq 2$ in any metric space was considered by many authors, and applications were found for instance in geometry, combinatorial optimization, and facility location. We refer to \cite[Chapter II]{BolMarSol99} and \cite{Drezner2002} for an account of the history of this problem. Also, in \cite{JahKupMarRic15}, the location problem is extended in various directions and studied also for very general metrics -- more general than those of normed spaces.

We observe that $F_Y$ need not be nonempty in a general metric space. However, it follows from the continuity of the function $h\colon X \to \R_+$ defined by $h(x)=\sum_{i=1}^n d(x_i,x)$ that $F_Y$ is nonempty whenever $(X,d)$ is a proper metric space. (Recall that a metric space is proper if every closed ball is compact.) In this section we will therefore assume that $(X,d)$ is a proper metric space.

\begin{proposition}\label{prop:fermat}
For any proper metric space $(X,d)$ and any integer $n\geq 2$, the map $d_F\colon X^n \to \R_+$ defined as
\[
d_F(x_1, \ldots, x_n)~=~\min_{x \in X}\sum_{i=1}^n d(x_i,x){\,},
\]
is an $n$-distance on $X$ and we call it the \emph{Fermat $n$-distance}.
\end{proposition}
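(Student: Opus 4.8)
The goal is to verify that $d_F$ satisfies conditions (i), (ii), and (iii) of Definition~\ref{de:n-dist}. Conditions (ii) and (iii) are straightforward, so the bulk of the work lies in the simplex inequality (i). I would first dispose of the easy parts: symmetry (ii) is immediate since $d_F$ is defined via the sum $\sum_{i=1}^n d(x_i,x)$, which is invariant under permutation of the $x_i$. For (iii), if $x_1=\cdots=x_n=a$, then choosing $x=a$ gives $d_F=0$; conversely, if $d_F(x_1,\ldots,x_n)=0$, then $\sum_{i=1}^n d(x_i,x)=0$ for the minimizing $x$, hence $d(x_i,x)=0$ for all $i$, so $x_1=\cdots=x_n=x$ by the identity of indiscernibles for $d$. (Properness guarantees the minimum is attained, which is why we work in a proper space.)

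For the simplex inequality, fix $x_1,\ldots,x_n,z\in X$ and let $y$ be a Fermat point of $\{x_1,\ldots,x_n\}$, so that $d_F(x_1,\ldots,x_n)=\sum_{i=1}^n d(x_i,y)$. For each $i\in[n]$, the value $d_F(x_1,\ldots,x_n)_i^z$ is the minimum over $x\in X$ of $\sum_{j\neq i} d(x_j,x) + d(z,x)$; evaluating this sum at the particular point $x=y$ gives the upper bound
\[
d_F(x_1,\ldots,x_n)_i^z ~\leq~ \sum_{j\neq i} d(x_j,y) + d(z,y) ~=~ d_F(x_1,\ldots,x_n) - d(x_i,y) + d(z,y).
\]
Summing this over $i\in[n]$ yields
\[
\sum_{i=1}^n d_F(x_1,\ldots,x_n)_i^z ~\leq~ n\,d_F(x_1,\ldots,x_n) - \sum_{i=1}^n d(x_i,y) + n\,d(z,y) ~=~ (n-1)\,d_F(x_1,\ldots,x_n) + n\,d(z,y).
\]
This is the wrong direction — it is an upper bound on the right-hand side of (i), whereas I need a lower bound. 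So the naive substitution $x=y$ does not work, and the real argument must go the other way.

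The correct approach is to bound each term $d_F(x_1,\ldots,x_n)_i^z$ from below. Let $w$ be a point achieving the minimum defining $d_F(x_1,\ldots,x_n)_i^z$, i.e. $d_F(x_1,\ldots,x_n)_i^z = \sum_{j\neq i} d(x_j,w) + d(z,w)$. Since $w$ is in particular a competitor for the Fermat problem of $\{x_1,\ldots,x_n\}$ after replacing $x_i$ by $z$... the key inequality I want is: for each $i$, using the triangle inequality $d(x_i,w)\leq d(x_i,z)+d(z,w)$, one has $\sum_{j=1}^n d(x_j,w) \leq \sum_{j\neq i}d(x_j,w) + d(x_i,z) + d(z,w) = d_F(x_1,\ldots,x_n)_i^z + d(x_i,z)$, and since $\sum_{j=1}^n d(x_j,w)\geq d_F(x_1,\ldots,x_n)$, we get $d_F(x_1,\ldots,x_n)_i^z \geq d_F(x_1,\ldots,x_n) - d(x_i,z)$. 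This still leaves a $\sum_i d(x_i,z)$ error term that I must control. To close the gap I would instead compare against $z$ directly: observe that $d_F(x_1,\ldots,x_n) \leq \sum_{j=1}^n d(x_j,z)$ by definition of the Fermat minimum, and also that for each $i$, $\sum_{j\neq i} d(x_j,z) \leq d_F(x_1,\ldots,x_n)_i^z$ (taking $x=z$ in the $i$th minimization makes the $d(z,x)$ term vanish). Summing the latter over $i$ gives $(n-1)\sum_{j=1}^n d(x_j,z) \leq \sum_{i=1}^n d_F(x_1,\ldots,x_n)_i^z$, and combining with $d_F(x_1,\ldots,x_n)\leq \sum_{j=1}^n d(x_j,z)$ yields $d_F(x_1,\ldots,x_n) \leq \frac{1}{n-1}\sum_{i=1}^n d_F(x_1,\ldots,x_n)_i^z$, which is even stronger than (i).

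The main obstacle is recognizing that the Fermat minimum should be compared against the test point $z$ itself (rather than against a Fermat point $y$ of the original tuple), turning each $d_F(x_1,\ldots,x_n)_i^z$ into a lower bound via the trivial competitor $x=z$; once that observation is made, the argument is short and in fact gives $K_n^*\leq \frac{1}{n-1}$. I would present the clean version above and note in passing that it also pins down the best constant (the matching lower bound being obtained, as in the earlier examples, by taking $x_1=\cdots=x_{n-1}=z\neq x_n$).
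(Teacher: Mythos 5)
Your handling of conditions (ii) and (iii) is fine (including the remark that properness guarantees attainment of the minimum), but the argument you give for the simplex inequality contains a reversed inequality at its crucial step, and the proof collapses there. You claim that ``$\sum_{j\neq i} d(x_j,z) \leq d_F(x_1,\ldots,x_n)_i^z$, taking $x=z$ in the $i$th minimization.'' Evaluating the expression $\sum_{j\neq i}d(x_j,x)+d(z,x)$ at the particular point $x=z$ can only bound the minimum from \emph{above}: it gives $d_F(x_1,\ldots,x_n)_i^z \leq \sum_{j\neq i}d(x_j,z)$. Your claimed lower bound would say that $z$ is a Fermat point of every modified tuple, which is false in general: take $n=3$ in the Euclidean plane with $x_1,x_2,x_3$ clustered within diameter $\varepsilon$ and $z$ at distance $R\gg\varepsilon$; then each $d_F(x_1,x_2,x_3)_i^z\approx R$, so $\sum_i d_F(x_1,x_2,x_3)_i^z\approx 3R$, whereas $(n-1)\sum_j d(x_j,z)\approx 6R$. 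So the intermediate inequality $(n-1)\sum_j d(x_j,z)\leq \sum_i d_F(\ldots)_i^z$ fails badly, and with it your derivation of (i). A further warning sign is the conclusion you draw: a two-line proof that the simplex inequality holds with $K_n=\frac{1}{n-1}$, which together with Proposition~\ref{prop:gen-F-K} would pin down $K_n^*=\frac{1}{n-1}$ exactly; but the paper only obtains the upper bounds $\lfloor n/2\rfloor^{-1}$ and $(4n-4)/(3n^2-4n)$ (Propositions~\ref{prop:gen-F-K} and \ref{prop:best}), both strictly larger than $\frac{1}{n-1}$, and explicitly lists improving these bounds as an open problem.

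The valid direction of your reasoning is the one you abandoned: letting $y_i$ be a Fermat point of the $i$th modified tuple, so that $d_F(x_1,\ldots,x_n)_i^z=\sum_{j\neq i}d(x_j,y_i)+d(z,y_i)$, does give usable lower bounds. (Your inequality $d_F(\ldots)_i^z\geq d_F(x_1,\ldots,x_n)-d(x_i,z)$ is correct but, as you noted, not sufficient by itself.) The paper's proof proceeds exactly from this representation: discard the terms $d(z,y_i)$, then apply the triangle inequality to selected pairs inside the double sum, namely $d(x_1,y_n)+d(x_2,y_n)\geq d(x_1,x_2)$ and $d(x_1,y_i)+d(x_{i+1},y_i)\geq d(x_1,x_{i+1})$ for $i=2,\ldots,n-1$, to obtain $\sum_{i=1}^n d_F(\ldots)_i^z\geq\sum_{i=2}^n d(x_1,x_i)$, and finally observe that $\sum_{i=2}^n d(x_1,x_i)=\sum_{i=1}^n d(x_1,x_i)\geq d_F(x_1,\ldots,x_n)$ by taking $x_1$ as a competitor in the defining minimum (the case $n=2$ is treated separately). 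Note that this establishes only the simplex inequality with $K=1$, i.e., that $d_F$ is an $n$-distance; it does not, and is not meant to, determine the best constant.
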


\begin{proof}
The map $d_F$ clearly satisfies conditions (ii) and (iii). Let us show that it satisfies condition (i). Assume first that $n=2$ and let $y_1,y_2\in X$ be such that
$$
d_F(z,x_2) ~=~ d(z,y_1)+d(x_2,y_1)\quad\text{and}\quad d_F(x_1,z) ~=~ d(x_1,y_2)+d(z,y_2).
$$
By applying the triangle inequality, we obtain
\begin{eqnarray*}
d_F(z,x_2)+d_F(x_1,z) &=& (d(x_1,y_2)+d(z,y_2))+(d(z,y_1)+d(x_2,y_1)) \\
& \geq & d(x_1,x_2) ~=~ d(x_1,x_1)+d(x_1,x_2) ~\geq ~ d_F(x_1,x_2).
\end{eqnarray*}

Assume now that $n\geq 3$ and let $y_1, \ldots, y_n \in X$ be such that
\[
d_F(x_1, \ldots, x_n)_{i}^{z}~=~\sum_{\substack{ j\neq i}} d(x_j,y_i)+d(z,y_i) , \qquad i=1,\ldots,n.
\]
It follows that
\begin{align*}
 \sum_{i=1}^n d_F(x_1, \ldots, x_n)_{i}^{z}&   ~\geq ~\sum_{i=1}^n \sum_{\substack{ j \neq i}} d(x_j, y_i)\\
& ~\geq ~\big(d(x_1,y_n)+d(x_2,y_n)\big)+\sum_{i=2}^{n-1} \big(d(x_{1},y_i)+d(x_{i+1},y_{i})\big),
\end{align*}
that is, by applying the triangle inequality,
\[
 \sum_{i=1}^n d_F(x_1, \ldots, x_n)_{i}^{z} ~\geq ~ \sum_{i=2}^{n} d(x_1, x_i)~=~ \sum_{i=1}^{n} d(x_1, x_i)~\geq ~d_F(x_1, \ldots, x_n),
\]
where the last inequality follows from the definition of $d_F$.
\end{proof}

In the next proposition we use rough counting arguments to obtain bounds for the best constant $K_n^*$ associated with the Fermat $n$-distance.

\begin{proposition}\label{prop:gen-F-K}
For every $n\geq 2$, the best constant $K_n^*$ associated with the Fermat $n$-distance satisfies the inequalities $\frac{1}{n-1}\le K_n^*\le \frac{1}{\lfloor n/2\rfloor}$.
\end{proposition}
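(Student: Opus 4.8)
The plan is to establish the two inequalities separately. The lower bound $K_n^*\geq\frac{1}{n-1}$ should follow from exhibiting a configuration where equality (or near-equality) forces $K$ close to $\frac{1}{n-1}$, exactly as in Examples~\ref{ex:max} and \ref{ex:sum}: take $x_1=\cdots=x_{n-1}=z$ and $x_n\neq z$ in some proper metric space (e.g. $\R$ with the usual metric). Then $d_F(x_1,\ldots,x_n)$ is the minimum of $\sum_i d(x_i,x)$, which for these points is achieved at $x=z$ (the weighted Fermat point collapses to the heavy cluster), giving $d_F=d(x_n,z)$. For each $i\in[n]$, $d_F(x_1,\ldots,x_n)_i^z$ is obtained by replacing the $i$th coordinate with $z$; if $i=n$ all coordinates equal $z$ so that term is $0$, while for $i<n$ nothing changes and that term equals $d(x_n,z)$. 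Hence $\sum_{i=1}^n d_F(\ldots)_i^z=(n-1)\,d(x_n,z)=(n-1)\,d_F(x_1,\ldots,x_n)$, forcing $K_n^*\geq\frac{1}{n-1}$.

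For the upper bound $K_n^*\leq\frac{1}{\lfloor n/2\rfloor}$, the idea is a counting/pairing argument in the spirit of the proof of Proposition~\ref{prop:fermat}. Fix $x_1,\ldots,x_n,z\in X$ and, for each $i\in[n]$, pick a Fermat point $y_i$ of $\{x_1,\ldots,x_n\}_i^z$, so that
\[
d_F(x_1,\ldots,x_n)_i^z ~=~ \sum_{j\neq i} d(x_j,y_i) + d(z,y_i).
\]
Summing over $i$ and discarding the $d(z,y_i)$ terms gives $\sum_{i=1}^n d_F(\ldots)_i^z\geq\sum_{i=1}^n\sum_{j\neq i}d(x_j,y_i)$. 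I would then select a set of $\lfloor n/2\rfloor$ of the indices $i$ — say $i\in\{1,\ldots,\lfloor n/2\rfloor\}$ — and for each such $i$ use two of the available terms $d(x_j,y_i)$ with $j\neq i$ together with the triangle inequality to bound $d_F(x_1,\ldots,x_n)$ from above. Concretely, since for each $i$ we may choose two distinct indices $j_1(i),j_2(i)\in[n]\setminus\{i\}$, the triangle inequality yields $d(x_{j_1(i)},y_i)+d(x_{j_2(i)},y_i)\geq d(x_{j_1(i)},x_{j_2(i)})$, and one wants the resulting $2\lfloor n/2\rfloor\leq n$ edges $\{x_{j_1(i)},x_{j_2(i)}\}$ arranged so that their total length is at least $d_F(x_1,\ldots,x_n)=\min_x\sum_k d(x_k,x)$. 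A clean way: route all chosen edges through $x_1$ (i.e. make $x_{j_1(i)}=x_1$ for every $i$) and let $x_{j_2(i)}$ run over $\lfloor n/2\rfloor$ distinct values among $x_2,\ldots,x_n$, so that $\sum_i\big(d(x_1,y_i)+d(x_{j_2(i)},y_i)\big)\geq\sum_i d(x_1,x_{j_2(i)})\geq$ a sum that dominates $\sum_{k}d(x_1,x_k)\geq d_F(x_1,\ldots,x_n)$ — here one must be slightly careful that $\lfloor n/2\rfloor$ edges from $x_1$ need not reach all $x_k$, so instead I would compare against $d_F$ via the point $x=x_1$, giving $d_F\leq\sum_k d(x_1,x_k)$, and then observe this requires using more terms. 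The genuinely robust route is: each index $i\in[\lfloor n/2\rfloor]$ contributes (via its own $y_i$ and the triangle inequality) a lower bound of the form $d(x_{2i-1},y_i)+d(x_{2i},y_i)\geq d(x_{2i-1},x_{2i})$, but this only gives a matching, not a spanning structure; so instead one fixes a single vertex, say $x_n$, and for each $i\in[\lfloor n/2\rfloor]$ uses the two terms $d(x_n,y_i)$ and $d(x_{i},y_i)$ (legitimate as long as $i\neq n$, which holds since $i\leq\lfloor n/2\rfloor<n$), obtaining $d_F(\ldots)_i^z\geq d(x_n,y_i)+d(x_i,y_i)\geq d(x_n,x_i)$. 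Summing, $\sum_{i=1}^n d_F(\ldots)_i^z\geq\sum_{i=1}^{\lfloor n/2\rfloor}d(x_i,x_n)$. This last sum, however, is at most $\sum_{i=1}^{n}d(x_i,x_n)$, not at least, so one still needs to relate it to $d_F$; the fix is to also throw in the index $i=n$ and bound $d_F(\ldots)_n^z\geq\sum_{j=1}^{n-1}d(x_j,y_n)\geq$ the missing edges, so that altogether $\sum_{i=1}^n d_F(\ldots)_i^z\geq\sum_{j=1}^{n-1}d(x_j,y_n)+\sum_{i=1}^{\lfloor n/2\rfloor-1}d(x_n,y_i)$ — and now each term $d(x_n,y_i)$ can be paired with a distinct $d(x_{j},y_n)$ through the common structure...

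I expect the pairing bookkeeping just sketched to be the main obstacle: making the counting argument genuinely yield the coefficient $\lfloor n/2\rfloor$ requires choosing, for roughly half the indices $i$, two terms from the bag $\{d(x_j,y_i): j\neq i\}$ so that the triangle-inequality outputs $d(x_{j_1},x_{j_2})$ assemble into something $\geq d_F(x_1,\ldots,x_n)$ — and one must verify that the bag of $n-1$ available terms at each $i$ is large enough to support this allocation without reusing a term. I would handle this by setting up an explicit bipartite assignment (e.g. splitting $[n]$ into $\lfloor n/2\rfloor$ pairs, using one Fermat point $y_i$ per pair to recover that pair's edge, then noting $\lfloor n/2\rfloor$ disjoint edges covering all $n$ vertices give $\sum_{\text{pairs}}d(x_a,x_b)$, and finally comparing this perfect-matching weight to $d_F$ by the inequality $\sum_{\text{pairs}}d(x_a,x_b)\geq\min_x\sum_k d(x_k,x)$, which itself follows from $d(x_a,x_b)\leq d(x_a,x)+d(x,x_b)$ for any fixed $x$ and summing). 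Once the right combinatorial template is fixed, the remaining steps are routine applications of the triangle inequality and the definition of $d_F$.
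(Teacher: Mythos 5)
Your lower bound is correct, and the degenerate configuration you use ($x_1=\cdots=x_{n-1}=z$ and $x_n\neq z$) exists in any metric space with at least two points, so it does force $K_n^*\ge\frac{1}{n-1}$ for the Fermat $n$-distance of the given space $(X,d)$ --- just carry it out inside $X$ rather than ``in some proper metric space such as $\R$'', since $K_n^*$ depends on $(X,d)$. (The paper argues differently, showing that $\sum_{i=1}^n d_F(x_1,\ldots,x_n)_i^z\le(n-1)\,d_F(x_1,\ldots,x_n)$ whenever $z$ is a Fermat point of $\{x_1,\ldots,x_n\}$; your explicit equality case is an acceptable substitute.)

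The upper bound is where the proposal genuinely breaks down, in both of the routes you sketch. In the ``fix $x_n$'' variant you use the term $d(x_i,y_i)$, but this term is not among the summands of \eqref{eq:lmsngf0}: in $d_F(x_1,\ldots,x_n)_i^z$ the point $x_i$ has been replaced by $z$, so only the terms $d(x_j,y_i)$ with $j\ne i$ and $d(z,y_i)$ are available --- recovering the missing $d(x_i,y_i)$ is exactly the difficulty. More seriously, your final matching template rests on the inequality $\sum_{\text{pairs}}d(x_a,x_b)\ge\min_x\sum_k d(x_k,x)$, which is false; the justification you give (sum $d(x_a,x_b)\le d(x_a,x)+d(x,x_b)$ over the $\lfloor n/2\rfloor$ disjoint pairs and optimize over $x$) proves the \emph{opposite} inequality $\sum_{\text{pairs}}d(x_a,x_b)\le d_F(x_1,\ldots,x_n)$, and the gap can be arbitrarily large (four points at the corners of a long thin rectangle, matched along the two short sides). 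Hence no argument that first reduces to a matching among the $x_k$ can produce the coefficient $\lfloor n/2\rfloor$. The missing idea --- the heart of the paper's proof --- is to keep the terms $d(z,y_i)$ that you discard at the outset: pairing consecutive indices, one bounds $d_F(\ldots)_i^z+d_F(\ldots)_{i+1}^z$ from below by retaining the entire sum $\sum_{j\ne i}d(x_j,y_i)$ and using $d(z,y_i)+d(z,y_{i+1})+d(x_i,y_{i+1})\ge d(x_i,y_i)$ (two triangle inequalities) to restore the missing summand, which yields $\sum_{j=1}^n d(x_j,y_i)\ge d_F(x_1,\ldots,x_n)$ directly from the definition of $d_F$ as a minimized full sum against a single reference point; summing over $\lfloor n/2\rfloor$ disjoint pairs of indices then gives $\sum_{i=1}^n d_F(\ldots)_i^z\ge\lfloor n/2\rfloor\,d_F(x_1,\ldots,x_n)$, i.e.\ $K_n^*\le\lfloor n/2\rfloor^{-1}$.
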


\begin{proof}
Let $x_1, \ldots, x_n \in X$ and let $z$ be a Fermat point of $\{x_1, \ldots, x_n\}$. For every $i\in [n]$, denote by $y_i$ a Fermat point of $\{z\} \cup \{x_1, \ldots, x_n\}\setminus\{x_i\}$. We then have
\begin{align}
d_F(x_1, \ldots, x_n)_i^z & ~=~\sum_{j\neq i}d(x_j, y_i)+d(z,y_i)\label{eq:lmsngf0}\\
& ~\leq~\sum_{j\neq i}d(x_j, z)+d(z,z)~=~\sum_{j\neq i}d(x_j, z).\nonumber
\end{align}
By summing over $i=1,\ldots,n$, we obtain
\[
\sum_{i=1}^n d_F(x_1, \ldots, x_n)_i^z~\leq~(n-1)\, \sum_{i=1}^n d(x_i,z)~=~(n-1)\, d_F(x_1, \ldots, x_n),
\]
which shows that $K_n^*\geq 1/(n-1)$.

Now, if $z$ denotes any element of $X$ and if $y_1, \ldots, y_n$ are defined as in the first part of the proof, the identity \eqref{eq:lmsngf0} holds for every $i\in [n]$. Then, for $i=1,\ldots,n-1$, we have
\begin{align}
d_F(x_1, \ldots, x_n)_i^z+d_F(x_1, \ldots, x_n)_{i+1}^z  & ~\geq~d(z,y_i)+d(z,y_{i+1})+d(x_i,y_{i+1})+\sum_{j\neq i} d(x_j,y_i)\nonumber\\
 & ~\geq~d(x_i,y_i)+\sum_{j\neq i} d(x_j,y_i)\label{eq:fgf}\\
& ~\geq~d_F(x_1, \ldots, x_n),\label{eq:fgf01}
\end{align}
where \eqref{eq:fgf} is obtained by a double application of the triangle inequality and \eqref{eq:fgf01} is obtained by definition of $d_F$.

It follows from \eqref{eq:fgf01} that $\sum_{i=1}^n d_F(x_1, \ldots, x_n)_i^z\geq \lfloor n/2\rfloor\, d_F(x_1, \ldots, x_n)$, which proves that $K_n^*\leq  \lfloor n/2\rfloor^{-1}$.
\end{proof}

The next proposition uses a more refined counting argument to provide an improvement of the upper bound obtained for $K_n^*$ in Proposition \ref{prop:gen-F-K}. Let us first state an immediate generalization of the hand-shaking lemma, which is folklore in graph theory.

\begin{lemma}\label{lem:graph}
Let $G=(V,E, w)$ be a weighted simple graph, where $w\colon E\to \R_+$ is the weighting function. If $f\colon V \to \R_+$ is such that $f(x)+f(y)\geq w(e)$ for every $e=\{x,y\}\in E$, then
\[
\sum_{x\in V} f(x)\, \deg_G(x)~\geq~\sum_{e\in E} w(e),
\]
where $\deg_G(x)$ is the degree of $x$ in $G$.
\end{lemma}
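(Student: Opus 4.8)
The plan is to prove the weighted hand-shaking lemma by summing the hypothesis $f(x)+f(y)\geq w(e)$ over all edges $e=\{x,y\}\in E$ and then reorganizing the left-hand side by vertex.

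First I would write
\[
\sum_{e\in E} w(e)~\leq~\sum_{e=\{x,y\}\in E}\big(f(x)+f(y)\big),
\]
which is simply the hypothesis applied edge-by-edge and summed (the sum being over finitely many edges, or, if one wishes to allow infinite graphs, over any finite subset, since all terms are nonnegative).

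The key step is then to recognize that in the double sum on the right, each vertex $x\in V$ contributes the term $f(x)$ exactly once for every edge of $G$ incident to $x$; since $G$ is a simple graph, the number of such edges is precisely $\deg_G(x)$. Hence
\[
\sum_{e=\{x,y\}\in E}\big(f(x)+f(y)\big)~=~\sum_{x\in V} f(x)\,\deg_G(x),
\]
and combining the two displays gives the claimed inequality.

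There is no real obstacle here: the statement is, as noted, folklore, and the only thing to be slightly careful about is the bookkeeping that turns the edge-indexed sum into the vertex-indexed sum — one should note that $G$ being simple (no loops, no multiple edges) is exactly what makes "$f(x)$ appears $\deg_G(x)$ times" correct. If the graph were allowed to be infinite one would add a remark that both sides may be $+\infty$ and the inequality still holds, but for the applications in this paper $G$ is finite and this subtlety can be ignored.
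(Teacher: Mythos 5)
Your proof is correct: summing the hypothesis over the edges and regrouping the double sum by vertices, where each $f(x)$ appears exactly $\deg_G(x)$ times since the graph is simple, is precisely the standard argument. The paper states this lemma as folklore and gives no proof, so there is nothing to compare beyond noting that your argument is exactly the one the authors leave implicit.
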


\begin{proposition}\label{prop:best}
For every $n\geq 2$, the best constant $K_n^*$ associated with the Fermat $n$-distance
 satisfies $K_n^*\le (4\, n-4)/(3\, n^2-4\, n).$
\end{proposition}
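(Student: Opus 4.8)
The plan is to refine the counting argument of Proposition~\ref{prop:gen-F-K} by applying Lemma~\ref{lem:graph} to a cleverly chosen weighted graph, rather than just pairing up consecutive indices. As in that proof, fix $x_1,\ldots,x_n\in X$, let $z$ be arbitrary, and for each $i\in[n]$ let $y_i$ be a Fermat point of $\{z\}\cup\{x_1,\ldots,x_n\}\setminus\{x_i\}$, so that
\[
d_F(x_1,\ldots,x_n)_i^z ~=~ d(z,y_i)+\sum_{j\neq i} d(x_j,y_i).
\]
The key observation is that for any two distinct indices $i,k\in[n]$, a double application of the triangle inequality gives
\[
d_F(x_1,\ldots,x_n)_i^z + d_F(x_1,\ldots,x_n)_k^z ~\geq~ d(x_i,y_k)+d(x_k,y_i)+\sum_{j\neq i}d(x_j,y_i) ~\geq~ d_F(x_1,\ldots,x_n),
\]
since after replacing $d(x_k,y_i)$ (a term in $\sum_{j\neq i}d(x_j,y_i)$, as $k\neq i$) we can bound $d(x_i,y_k)+d(x_k,y_i)\geq d(x_i,y_i)$ wait, more carefully: from $d_F(\ldots)_k^z\geq d(x_i,y_k)$ and $d(x_i,y_k)+d(x_k,y_i)\geq d(x_i,x_k)$, and $d(x_i,x_k)+\sum_{j\neq i,k}d(x_j,y_i)+d(x_k,y_i)$ rearranges via $d(x_i,x_k)+d(x_k,y_i)\geq d(x_i,y_i)$ to give at least $d(x_i,y_i)+\sum_{j\neq i,k}d(x_j,y_i)+\big(\text{leftover}\big)\geq d_F$. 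So the pairwise bound holds for \emph{every} pair $\{i,k\}$, not merely adjacent ones.

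Now I would set up the weighted graph $G=(V,E,w)$ with $V=[n]$, with $w$ constant equal to $d_F(x_1,\ldots,x_n)$ on whichever edge set $E$ we choose, and with $f(i)=d_F(x_1,\ldots,x_n)_i^z$. The hypothesis of Lemma~\ref{lem:graph} is exactly the pairwise inequality just established, so it holds for any choice of $E\subseteq\binom{[n]}{2}$. The lemma then yields
\[
\sum_{i=1}^n f(i)\,\deg_G(i) ~\geq~ |E|\cdot d_F(x_1,\ldots,x_n).
\]
To extract a bound on $\sum_i f(i)$ alone, I want a graph that is as dense as possible (to make $|E|$ large) while keeping the maximum degree $\Delta$ small, because $\sum_i f(i)\,\deg_G(i)\leq \Delta\sum_i f(i)$ gives $\sum_i f(i)\geq (|E|/\Delta)\,d_F$, hence $K_n^*\leq \Delta/|E|$. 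For a $\Delta$-regular graph on $n$ vertices we have $|E|=n\Delta/2$, so $\Delta/|E|=2/n$, which only recovers the $\lfloor n/2\rfloor^{-1}$ bound. The improvement must come from choosing a graph that is \emph{not} regular: take a vertex of high degree together with a low-degree remainder. Concretely, I expect the optimal choice to be a graph consisting of a complete graph $K_3$ on three vertices together with all edges from those three vertices to the remaining $n-3$ vertices (the three "hub" vertices having degree $n-1$, the other $n-3$ vertices having degree $3$); then $|E|=3+3(n-3)=3n-6$ and the degree sequence lets one bound $\sum_i f(i)\,\deg_G(i)$ against $\sum_i f(i)$ with constant $(4n-4)/3$ after optimizing which three vertices serve as hubs, giving precisely $K_n^*\leq (4n-4)/(3n^2-4n)=(4n-4)/(n(3n-4))$.

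The main obstacle is the combinatorial optimization step: I must show that the bound obtained from a single graph does not depend on the labeling, and I need to \emph{average} the inequality over a suitable family of such hub-graphs (one for each choice of the unordered triple of hub vertices, or more efficiently over all $n$ cyclic-type rotations of a fixed pattern) so that the resulting coefficient of each $f(i)$ becomes independent of $i$. Summing $\binom{n}{3}$ copies (one per triple) of $\sum_i f(i)\deg_{G}(i)\geq (3n-6)d_F$, each vertex plays the hub role in $\binom{n-1}{2}$ triples and the non-hub role in the others, so the left-hand side collapses to $c_n\sum_i f(i)$ for an explicit $c_n$, and dividing through yields $\sum_i f(i)\geq \frac{3n-6}{c_n/\binom{n}{3}}d_F$. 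Carrying out this averaging and simplifying the binomial coefficients is routine but must be done carefully to land exactly on $(3n^2-4n)/(4n-4)$; the lower-bound side $K_n^*\geq 1/(n-1)$ is already supplied by Proposition~\ref{prop:gen-F-K} and needs no repetition.
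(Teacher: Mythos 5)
There is a genuine gap: the combinatorial scheme you propose cannot reach the claimed bound, no matter how the graph or the averaging is chosen. Your only input to Lemma~\ref{lem:graph} is the pairwise inequality $f(i)+f(k)\geq d_F(x_1,\ldots,x_n)$ for all $i\neq k$, where $f(i)=d_F(x_1,\ldots,x_n)_i^z$ (this inequality is indeed valid for every pair, as you argue, and is essentially \eqref{eq:fgf01} in the paper). But the vector $f(i)=\tfrac12\, d_F(x_1,\ldots,x_n)$ for all $i$ satisfies every one of these constraints, so no linear combination of them — equivalently, no choice of edge set, weights equal to $d_F$, hub structure, or averaging over relabelings — can certify anything stronger than $\sum_{i=1}^n f(i)\geq\frac n2\, d_F(x_1,\ldots,x_n)$, i.e.\ $K_n^*\leq 2/n$. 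The target requires $\sum_{i=1}^n f(i)\geq\frac{n(3n-4)}{4(n-1)}\, d_F(x_1,\ldots,x_n)$, and $\frac{n(3n-4)}{4(n-1)}>\frac n2$ for every $n\geq 3$, so your approach is blocked in principle for all $n\geq 3$. Concretely, your own accounting already shows this: averaging a fixed pattern over all relabelings gives coefficient $2|E|/n$ on each $f(i)$ and right-hand side $|E|\,d_F$, which collapses to $2/n$ regardless of the pattern; and bounding $\sum_i f(i)\deg_G(i)$ by $\Delta\sum_i f(i)$ for the hub graph gives only $\frac{3n-6}{n-1}\,d_F$, a constant multiple of $d_F$, not the required growth of order $\tfrac{3n}{4}\,d_F$. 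The constant $(4n-4)/3$ you posit for the hub graph does not materialize.

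The missing idea is to open up each term $f(i)$ into its constituent distances $d(z,y_i)$ and $d(x_j,y_i)$ and work at that finer level. The paper combines three ingredients you do not use: (1) the triangle inequality $d(z,y_i)+d(z,y_j)+d(x_i,y_j)\geq d(x_i,y_i)$ summed over all ordered pairs, which trades the $d(z,y_i)$ terms for the diagonal terms $d(x_i,y_i)$; (2) the bound $\sum_{j=1}^n d(x_j,y_i)\geq d_F(x_1,\ldots,x_n)$ coming directly from the definition of $d_F$ as a minimum; and (3) Lemma~\ref{lem:graph} applied not to the indices with constant weight $d_F$, but to the vertex set $\{x_j: j\neq i\}$ with $f(x_j)=d(x_j,y_i)$ and weights equal to the \emph{actual} pairwise distances $w(\{x_k,x_\ell\})=d(x_k,x_\ell)$ (the hypothesis being just the triangle inequality), followed by $\sum_{\ell} d(x_k,x_\ell)\geq d_F(x_1,\ldots,x_n)$. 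It is the use of the true distances $d(x_k,x_\ell)$ as edge weights, rather than the single aggregated quantity $d_F$, that breaks the $2/n$ barrier and produces the factor $\frac{n(3n-4)}{4(n-1)}$.
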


\begin{proof}
Let $z,x_1, \ldots, x_n, y, y_1, \ldots, y_n \in X$ be such that $y$ is a Fermat point of $\{x_1, \ldots, x_n\}$ and such that equation \eqref{eq:lmsngf0} holds for every $i\in [n]$. For any distinct $i, j\in [n]$, by the triangle inequality we have
\begin{equation}\label{eq:best01}
d(z,y_i)+d(z,y_j)+d(x_i,y_j)~\geq~d(x_i, y_i).
\end{equation}
By summing \eqref{eq:best01} over all $j\in [n]\setminus\{i\}$ we obtain
\begin{equation}\label{eq:best02}
(n-1)\, d(z,y_i)+\sum_{j\neq i} \big( d(z,y_j)+d(x_i, y_j)\big)~\geq~(n-1)\, d(x_i,y_i).
\end{equation}
By summing \eqref{eq:best02} over all $i\in [n]$ we then obtain
\begin{equation}\label{eq:best0}
2\, (n-1)\, \sum_{i=1}^n d(z,y_i)+\sum_{i=1}^n\sum_{j\neq i}d(x_j,y_i)~\geq~(n-1)\, \sum_{i=1}^n d(x_i,y_i).
\end{equation}
Let us set $S=\sum_{i=1}^n \sum_{j\neq i} d(x_j,y_i)$. We then have
\begin{align}
2\,(n-1)\, \sum_{i=1}^n d_F(x_1, \ldots, x_n)_i^z & = ~(2\, n-3)\, S+S+2\, (n-1)\sum_{i=1}^n d(z,y_i)\label{eq:best03}\\
& \geq~(2\, n-3)\, S+(n-1)\, \sum_{i=1}^n d(x_i,y_i)\label{eq:best04}\\
& =~(n-2)\, S+(n-1)\, \sum_{i=1}^n \sum_{j=1}^n d(x_j,y_i),\label{eq:best05}
\end{align}
where \eqref{eq:best03} follows by the definitions of $S$ and $d_F$, \eqref{eq:best04} follows by \eqref{eq:best0}, and \eqref{eq:best05} by the definition of $S$.

Now, on the one hand, by the definition of $d_F$ we have
\begin{equation}\label{eq:best06}
(n-1)\, \sum_{i=1}^n \sum_{j=1}^n d(x_j,y_i)~\geq~n\,(n-1)\,d_F(x_1, \ldots, x_n).
\end{equation}
On the other hand, let us fix $i\in [n]$ and set $V=\{x_1, \ldots, x_n\}\setminus\{x_i\}$. Define the function $f\colon V \to \R_+$ by $f(x_j)=d(x_j,y_i)$ for any $j\neq i$, and consider the complete weighted graph $G=(V, \binom{V}{2}, w)$ defined by $w(\{x_\ell, x_j\})=d(x_\ell, x_j)$ for any distinct $x_\ell, x_k \in V$. It follows from Lemma \ref{lem:graph} that
\begin{equation}\label{eq:best07}
(n-2)\, \sum_{j\neq i} d(x_j, y_i)~\geq~\sum_{\{x_k,x_\ell\}\in \binom{V}{2}} d(x_k,x_\ell).
\end{equation}
By summing \eqref{eq:best07} over all $i\in [n]$, we get
\begin{align}
(n-2)\, S & ~\geq~(n-2)\ \sum_{\{k,\ell\}\in \binom{n}{2}} d(x_k,x_\ell)~=~\frac{n-2}{2}\, \sum_{k=1}^n\sum_{\ell=1}^n d(x_k, x_\ell)\nonumber\\
& ~\geq ~n\, \frac{n-2}{2}\, d_F(x_1, \ldots, x_n),\label{eq:best08}
\end{align}
where \eqref{eq:best08} is obtained by definition of $d_F$. By substituting \eqref{eq:best06} and  \eqref{eq:best08} into  \eqref{eq:best05}, we finally obtain
\[
\sum_{i=1}^n d_F(x_1, \ldots, x_n)_i^z~\geq~\frac{ n\, (3\, n-4)}{4\, (n-1)}\, d_F(x_1, \ldots, x_n),
\]
which proves that $K_n^*\leq  (4\, n-4)/(3\, n^2-4\, n)$.
\end{proof}

We observe that Proposition~\ref{prop:best} provides a better upper bound than Proposition~\ref{prop:gen-F-K} for every $n\geq 2$, but the difference between these bounds converges to zero as $n$ tends to infinity. The high number of inequalities involved in the proof of Proposition \ref{prop:best} suggests that it is in general very difficult to obtain the exact value of $K_n^*$ (we have to find $x_1, \ldots, x_n, z\in X$ that turn these inequalities into equalities). However, we will now show that we can determine the value of $K_n^*$ when $d_F$ is the Fermat $n$-distance associated with the distance function in median graphs.

Recall that a \emph{median graph} is a connected undirected simple graph in which, for any triplet of vertices $u,v,w$, there is one and only one vertex $\mathbf{m}(u,v,w)$ that is at the intersection of shortest paths between any two elements among $u,v,w$. Cubes and trees are instances of median graphs. In a median graph $G=(V,E)$, the Fermat 3-distance is the function $d_{\mathbf{m}}\colon V^3\to\R_+$ defined by
\begin{equation}\label{eq:medG}
d_{\mathbf{m}}(u,v,w) ~=~ \min_{y\in V} ~ \big(d(u,y)+d(v,y)+d(w,y)\big),
\end{equation}
where $d$ denotes the usual distance function between vertices in a connected graph.

\begin{proposition}
If $G=(V,E)$ is a median graph, then the best constant $K^*$ associated with its Fermat 3-distance $d_\me$ is equal to $\frac{1}{2}$.
Moreover, the only Fermat point of $\{u,v,w\}$ is $\me(u,v,w)$.
\end{proposition}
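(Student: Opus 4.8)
The plan is to move the whole computation to the standard hypercube representation of median graphs. It is classical that the median graphs are exactly the partial cubes closed under coordinatewise majority: there is an isometric embedding $V\hookrightarrow\{0,1\}^{I}$ (for a suitable index set $I$, with any two vertices differing in only finitely many coordinates) under which the graph distance becomes the Hamming distance $d(u,v)=|\{t\in I:u_t\neq v_t\}|$, and for any three vertices $u,v,w$ the coordinatewise majority (whose $t$th coordinate is the majority value among $u_t,v_t,w_t$) is again a vertex and coincides with $\me(u,v,w)$. I will take this as known.

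\textbf{First step: a closed formula for $d_\me$.} Fix a coordinate $t$. As a function of $s\in\{0,1\}$, the quantity $|u_t-s|+|v_t-s|+|w_t-s|$ attains its minimum \emph{only} at $s$ equal to the majority value of $u_t,v_t,w_t$, and that minimum equals $1$ if $\{u_t,v_t,w_t\}=\{0,1\}$ and $0$ otherwise. Summing over $t$, and using that the coordinatewise majority is realized by a vertex, I obtain
\[
d_\me(u,v,w)~=~\min_{y\in V}\sum_{i=1}^{3}d(x_i,y)~=~\bigl|\{t\in I:\{u_t,v_t,w_t\}=\{0,1\}\}\bigr|,
\]
where $(x_1,x_2,x_3)=(u,v,w)$, the minimum being attained at the \emph{unique} vertex $y=\me(u,v,w)$. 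This already proves the second assertion and reduces the first to an inequality about $0/1$ patterns.

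\textbf{Second step: the best constant.} The bound $K^*\geq\frac{1}{2}$ is immediate, either from Proposition~\ref{prop:gen-F-K} (case $n=3$), or from the configuration $u=v=z$ and $w\neq z$, for which $\sum_{i=1}^{3}d_\me(u,v,w)_i^z=2\,d(z,w)=2\,d_\me(u,v,w)$. For $K^*\leq\frac{1}{2}$ I apply the formula above to each of the triples $\{u,v,w\}$, $\{z,v,w\}$, $\{u,z,w\}$, $\{u,v,z\}$, so that the simplex inequality $d_\me(u,v,w)\leq\frac{1}{2}\sum_{i=1}^{3}d_\me(u,v,w)_i^z$ reduces to checking, coordinate by coordinate, that for all $\alpha,\beta,\gamma,\delta\in\{0,1\}$ (with $(\alpha,\beta,\gamma,\delta)=(u_t,v_t,w_t,z_t)$),
\[
[\{\alpha,\beta,\gamma\}=\{0,1\}]~\leq~\tfrac{1}{2}\bigl([\{\delta,\beta,\gamma\}=\{0,1\}]+[\{\alpha,\delta,\gamma\}=\{0,1\}]+[\{\alpha,\beta,\delta\}=\{0,1\}]\bigr),
\]
where $[P]\in\{0,1\}$ denotes the truth value of $P$. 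If $\alpha,\beta,\gamma$ are all equal the left-hand side is $0$ and there is nothing to prove; otherwise exactly one of $\alpha,\beta,\gamma$ carries the minority value, and replacing \emph{either} of the two majority entries by $\delta$ still leaves both $0$ and $1$ present, so at least two of the three brackets on the right equal $1$ and the right-hand side is $\geq 1$. Summing over all coordinates gives the simplex inequality with constant $\frac{1}{2}$, hence $K^*=\frac{1}{2}$.

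The only genuinely non-trivial ingredient is the first step, and in particular the claim that the minimum defining $d_\me$ is attained exactly at the coordinatewise majority and nowhere else: this is precisely where the median-graph hypothesis is used, via the partial-cube representation together with majority-closure. Once this is in place, everything else is a short finite case check. A minor point to handle carefully is that each term $d_\me(u,v,w)_i^z$ must likewise be evaluated through the same formula, i.e. the majority-closure property has to be invoked for the triples $\{z,v,w\}$, $\{u,z,w\}$ and $\{u,v,z\}$ as well.
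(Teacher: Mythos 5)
Your proof is correct, but it follows a genuinely different route from the paper's. The paper argues intrinsically: by the very definition of a median graph, $\me(u,v,w)$ lies on a shortest path between each pair among $u,v,w$, hence it simultaneously minimizes $d(u,y)+d(v,y)$, $d(u,y)+d(w,y)$ and $d(v,y)+d(w,y)$, which gives $d_\me(u,v,w)=\frac{1}{2}\big(d(u,v)+d(u,w)+d(v,w)\big)$ and $\min_{z}d_\me(u,v,z)=d(u,v)$; summing the three such identities shows that $\min_z\sum_{i=1}^3 d_\me(u,v,w)_i^z=2\,d_\me(u,v,w)$, with the minimum attained at $z=\me(u,v,w)$ — so only the triangle inequality and the defining property of median graphs are used. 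You instead invoke Bandelt's characterization of median graphs as majority-closed isometric subgraphs of hypercubes, and reduce everything to a per-coordinate Boolean verification. That is much heavier machinery (and should be cited precisely, in a form valid for infinite median graphs, which the paper does not exclude), but once granted it buys a completely transparent treatment of the second assertion — uniqueness of the Fermat point, which the paper states but handles rather implicitly — and your case analysis, including the remark that majority-closure must also be applied to the triples $\{z,v,w\}$, $\{u,z,w\}$, $\{u,v,z\}$, is airtight. Your lower bound via the degenerate configuration $u=v=z\neq w$ is valid (it is just Proposition~\ref{prop:gen-F-K} with $n=3$), whereas the paper's computation yields the slightly stronger fact that equality in the simplex inequality with $K=\frac{1}{2}$ occurs at $z=\me(u,v,w)$ for every triple $u,v,w$; also note that your closed formula for $d_\me$ is exactly the paper's identity $d_\me(u,v,w)=\frac{1}{2}\big(d(u,v)+d(u,w)+d(v,w)\big)$ read off coordinatewise, so it could have been obtained in two lines without the embedding.
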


\begin{proof}
The minimum in \eqref{eq:medG} is realized by any $y_0\in V$ that realizes the minimum of the values
\begin{equation}\label{eq:dbmed}
\big(d(u,y)+d(v,y)\big)+\big(d(w,y)+ d(u,y)\big)+\big(d(v,y)+d(w,y)\big)
\end{equation}
for $y\in V$. By definition, the vertex $y_0=\me(u,v,w)$ is on shortest paths  between any two elements among $u,v, w$, which shows that it realizes the minimum of each of the three terms in \eqref{eq:dbmed}, and hence the minimum in \eqref{eq:medG}.

It follows that
\begin{align*}
 d_\me(u,v,z) &= d(u,y_0)+d(v,y_0)+d(z,y_0)\\
& ~=~ \frac{1}{2}(d(u,y_0)+d(v,y_0)+d(z,y_0)+d(u,y_0)+d(v,y_0)+d(z,z_0))\\
& ~=~  \frac{1}{2}(d(u,v)+d(u,z)+d(v,z)),
\end{align*}
which shows that $\min_{z\in V} d_\me(u,v,z)$ is equal to $d(u,v)$, and is realized by any element $z_0$ on a shortest path between $u$ and $v$. We conclude that the minimum of
\[
d_\me(z,v,w)+d_\me(u,z,w)+d_\me(u,v,z)
\]
for $z \in V$ is realized by $z_0=\me(u,v,w)$, and is equal to $d(v,w)+d(u,w)+d(u,v)=2\, d_\me(u,v,w)$. We have proved that the best constant $K^*$ associated with $d_\me$ is $\frac{1}{2}$.
\end{proof}

%----------------------------------------------------------------------------------------------
\section{Examples of $n$-distances based on geometric constructions}\label{sec:geo}

In this section we introduce $n$-distances defined from certain geometric constructions and investigate their corresponding best constants. In what follows, we denote by $d$ the Euclidean distance on $\R^k$ for some integer $k\geq 2$.

The first $n$-distances we investigate are based on the following construction.

\begin{definition}\label{de:sphere}
For any $n\geq 2$ and any $x_1, \ldots, x_n \in \R^k$, we denote by $S(x_1,\ldots, x_n)$ the smallest  $(k-1)$-dimensional sphere enclosing $\{x_1, \ldots, x_n\}$. For any $i\in [n]$ and any $z\in\R^k$, we denote by $S(x_1,\ldots,x_n)_i^z$ the smallest $(k-1)$-dimensional sphere enclosing $\{x_1,\ldots,x_{i-1},z,x_{i+1},\ldots,x_n\}$.
%We also denote by $B(x_1,\ldots, x_n)$ and $B(x_1,\ldots,x_n)_i^z$ the closed balls with boundaries $S(x_1,\ldots, x_n)$ and $S(x_1,\ldots,x_n)_i^z$, respectively.
\end{definition}

The sphere introduced in Definition~\ref{de:sphere} always exists and is unique. Moreover, it can be computed in linear time \cite{Megiddo1983,Megiddo1984} or expected linear time \cite{Welzl1991}.

When $k=2$, we have the following fact.

\begin{fact}\label{fact:trian}
Let $A, B, C$ be the vertices of a triangle in $\R^2$.
\begin{enumerate}
\item[(a)]\label{it:tr01} If $ABC$ forms an acute triangle with angles $\alpha$, $\beta$ and $\gamma$, respectively, then $S(A,B,C)$ is the circumcircle $\mathcal{C}$ of $ABC$ whose radius $R$ satisfies
\begin{equation}\label{eq:sin}
R~=~\frac{a}{2\, \sin \alpha}~=~\frac{b}{2\, \sin \beta}~=~\frac{c}{2\, \sin \gamma}{\,},
\end{equation}
where $a=d(B,C)$, $b=d(A,C)$, and $c=d(A,B)$. Let $A^*$ be one of the two points of the circle $\mathcal{C}$ that is on the bisector of $BC$. Then the perimeter of the triangle $ABC$ strictly decreases as $A$ moves along $\mathcal{C}$ from $A^*$ to $B$ (or from $A^*$ to $C$).

\item[(b)]\label{it:tr02} If $ABC$ is obtuse in $A$, then $S(A,B,C)$ contains $B$ and $C$, and its diameter is equal to $a$.
\item[(c)]\label{it:tr03} It follows from (a) and (b) that the radius  $R$ of $S(A,B,C)$ satisfies
\begin{equation}\label{eq:triv}
R~\geq~\max\big\{\frac{a}{2},{\,}\frac{b}{2}, {\,}\frac{c}{2} \big\}.
\end{equation}

\end{enumerate}
\end{fact}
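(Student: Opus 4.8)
The plan is to settle the three items in the order (b), (a), (c), relying on two elementary facts of plane geometry. The first is Thales' theorem: a point $A$ lies strictly inside, on, or strictly outside the circle having $BC$ as a diameter according as the angle $\angle BAC$ is obtuse, right, or acute. The second is that any circle enclosing both $B$ and $C$ has radius at least $d(B,C)/2$, with equality only for the circle of diameter $BC$. I will also use the standard characterization that the smallest enclosing circle of three points either is their circumcircle or has two of them as the endpoints of a diameter.

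For (b), assume $ABC$ is obtuse at $A$. By Thales, $A$ lies strictly inside the circle $\Gamma$ of diameter $BC$, so $\Gamma$ encloses $\{A,B,C\}$ and has radius $a/2$; by the second fact no enclosing circle has smaller radius, hence $S(A,B,C)=\Gamma$, which passes through $B$ and $C$ and has diameter $a$. For (a), assume $ABC$ is acute. A circle with, say, $B$ and $C$ as the endpoints of a diameter has radius $a/2$ and, by Thales, encloses $A$ only if $\angle BAC\geq\pi/2$; since the triangle is acute this is impossible, and the same holds for the other two pairs, so $S(A,B,C)$ must be the circumcircle $\mathcal{C}$. The law of sines then gives $R=a/(2\sin\alpha)=b/(2\sin\beta)=c/(2\sin\gamma)$.

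There remains the perimeter statement. Keep $B$, $C$ and $\mathcal{C}$ fixed and let $A$ range over the open arc of $\mathcal{C}$ bounded by $B$ and $C$ that contains $A^*$. On this arc the inscribed-angle theorem keeps $\alpha$, hence $a=2R\sin\alpha$, constant, while $b=2R\sin\beta$ and $c=2R\sin\gamma$ vary subject to $\beta+\gamma=\pi-\alpha$, so the perimeter equals
\[
a+2R(\sin\beta+\sin\gamma)~=~a+4R\,\sin\tfrac{\pi-\alpha}{2}\,\cos\tfrac{\beta-\gamma}{2}.
\]
As $A$ moves from $A^*$ (where $\beta=\gamma$) toward $B$, the arc joining $B$ and $A$ and not containing $C$ shrinks, so its inscribed angle $\gamma$ decreases strictly to $0$; then $\beta-\gamma=\pi-\alpha-2\gamma$ increases strictly from $0$ to $\pi-\alpha<\pi$, the factor $\cos\tfrac{\beta-\gamma}{2}$ decreases strictly, and so does the perimeter. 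The motion toward $C$ is symmetric, with the roles of $\beta$ and $\gamma$ interchanged. Finally, (c) follows at once: if $ABC$ is acute then $R=a/(2\sin\alpha)\geq a/2$ and likewise $R\geq b/2$, $R\geq c/2$; and if $ABC$ is not acute, its largest angle, say the one at $A$, is at least $\pi/2$, so $a$ is the longest side and $R\geq d(B,C)/2=a/2\geq\max\{b/2,c/2\}$ because $S(A,B,C)$ contains both $B$ and $C$.

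The single step requiring genuine care is the monotonicity of the spread $|\beta-\gamma|$ along the arc. I would establish it through the inscribed-angle theorem, noting that the arc of $\mathcal{C}$ from $B$ to $A$ avoiding $C$ has central angle $2\gamma$ and hence varies monotonically with the position of $A$, while keeping careful track of which of $\beta$ and $\gamma$ grows as $A$ approaches $B$ versus $C$.
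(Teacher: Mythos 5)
Your argument is correct, but note that the paper does not actually prove this statement: it is labelled a \emph{Fact} and stated without proof, being regarded as elementary plane geometry (the only related citation is to Chrystal for the structure of the smallest enclosing circle of $n$ points, which you also invoke in the three-point case). Your write-up therefore supplies what the authors leave to the reader, and it does so soundly: Thales' theorem settles (b) and rules out the two-point circles in the acute case, the law of sines gives \eqref{eq:sin}, and the perimeter monotonicity follows cleanly from writing the perimeter as $a+4R\sin\frac{\pi-\alpha}{2}\cos\frac{\beta-\gamma}{2}$ with $\alpha$ constant on the arc and $|\beta-\gamma|=|\pi-\alpha-2\gamma|$ strictly increasing as $A$ leaves $A^*$ (your flagged step is handled correctly by the inscribed-angle theorem, and usefully it does not require the moving triangle to stay acute, which is exactly how the fact is used later in the proof of Proposition~\ref{prop:radius}). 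One small simplification you could make: item (c) needs no case analysis at all, since $S(A,B,C)$ encloses each pair of vertices and any circle enclosing two points at distance $\ell$ has radius at least $\ell/2$, giving $R\geq\max\{a,b,c\}/2$ directly; your detour through the law of sines in the acute case is correct but unnecessary.
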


\begin{proposition}[Radius of $S(x_1, \ldots, x_n)$ in $\R^2$]\label{prop:radius}
For any $n \geq 2$, the map $d_r\colon(\R^2)^n \to \R_+$ that associates with any $(x_1, \ldots, x_n) \in (\R^2)^n$ the radius of $S(x_1, \ldots, x_n)$ is an $n$-distance for which we have $K_n^*=\frac{1}{n-1}$.
\end{proposition}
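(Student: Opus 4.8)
The plan is to establish the two inequalities $K_n^*\le\frac{1}{n-1}$ and $K_n^*\ge\frac{1}{n-1}$ separately, following the pattern of Examples~\ref{ex:max} and \ref{ex:sum}. First I would verify that $d_r$ is genuinely an $n$-distance: symmetry (ii) is clear since $S(x_1,\ldots,x_n)$ depends only on the set $\{x_1,\ldots,x_n\}$, and for (iii) note that the radius is $0$ exactly when all points coincide. The simplex inequality (i) will come for free from the stronger statement $K_n^*\le\frac1{n-1}$, so I would not prove it separately.

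For the upper bound $K_n^*\le\frac{1}{n-1}$, fix $x_1,\ldots,x_n,z\in\R^2$ and let $R$ be the radius of $S(x_1,\ldots,x_n)$. The key geometric input is that the boundary circle of $S(x_1,\ldots,x_n)$ either passes through two of the points, say $x_p$ and $x_q$, with $x_px_q$ a diameter (the "obtuse"/two-point case), or passes through three of them, say $x_p,x_q,x_s$, forming an acute triangle (the three-point case); this is exactly the content of Fact~\ref{fact:trian} applied to the determining subset, and it is the standard characterization of the smallest enclosing circle in the plane. In the two-point case, $2R=d(x_p,x_q)$, and for each $i\in[n]$ the sphere $S(x_1,\ldots,x_n)_i^z$ still must enclose both $x_p$ and $x_q$ unless $i\in\{p,q\}$; by Fact~\ref{fact:trian}(c) its radius is at least $\frac12 d(x_p,x_q)=R$ when $i\notin\{p,q\}$, while for $i=p$ it is at least $\frac12 d(x_q,z)$ and for $i=q$ at least $\frac12 d(x_p,z)$. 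Summing and using the triangle inequality $d(x_q,z)+d(x_p,z)\ge d(x_p,x_q)=2R$ gives $\sum_{i=1}^n d_r(x_1,\ldots,x_n)_i^z\ge (n-2)R+R=(n-1)R$. In the three-point case the same accounting works: for $i\notin\{p,q,s\}$ the modified sphere still encloses all three of $x_p,x_q,x_s$ and hence has radius $\ge R$, so it suffices to show $d_r(\cdot)_p^z+d_r(\cdot)_q^z+d_r(\cdot)_s^z\ge 2R$, which by Fact~\ref{fact:trian}(c) is implied by, e.g., $d(x_q,x_s)+d(x_p,z)+\text{(something)}\ge\ldots$; more cleanly, each of the three modified spheres contains two of the original three points, so its radius is at least half the corresponding side, and one shows $\frac12(a'+b'+c')\ge 2R$ where $a',b',c'$ are the relevant chord lengths — here I would lean on~\eqref{eq:sin} and \eqref{eq:triv} together with the fact that replacing a vertex of the acute triangle $x_px_qx_s$ inscribed in $\mathcal C$ by an arbitrary point only makes the enclosing sphere larger (this is where Fact~\ref{fact:trian}(a)'s monotonicity-of-perimeter statement, which the authors evidently included for this purpose, does the work).

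For the lower bound $K_n^*\ge\frac1{n-1}$, I would exhibit an equality case exactly as in the earlier examples: take $x_1=\cdots=x_{n-1}=z$ and $x_n\neq z$. Then $S(x_1,\ldots,x_n)$ has diameter $d(z,x_n)$, so $R=\frac12 d(z,x_n)$; each sphere $S(x_1,\ldots,x_n)_i^z$ for $i\in[n-1]$ equals $S(x_1,\ldots,x_n)$ and has radius $R$, while $S(x_1,\ldots,x_n)_n^z$ is a single point of radius $0$. Hence $\sum_{i=1}^n d_r(x_1,\ldots,x_n)_i^z=(n-1)R=(n-1)\,d_r(x_1,\ldots,x_n)$, forcing $K_n^*\ge\frac1{n-1}$.

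The main obstacle I anticipate is the bookkeeping in the three-point (acute) case of the upper bound: unlike the diameter case, the radius $R$ is not simply half of one chord, so reducing $\sum_{i\in\{p,q,s\}} d_r(\cdot)_i^z\ge 2R$ to an elementary inequality requires carefully combining the sine rule~\eqref{eq:sin}, the bound~\eqref{eq:triv}, and the monotonicity clause of Fact~\ref{fact:trian}(a). I would handle it by arguing that for each $i\in\{p,q,s\}$ the modified configuration still contains the other two of $\{x_p,x_q,x_s\}$ and possibly more, so $d_r(\cdot)_i^z$ is at least the radius of the smallest circle through those two points and $z$; a short case analysis on whether that circle is determined by a diameter or by an acute triple, plus Fact~\ref{fact:trian}(a), then yields the needed $2R$ lower bound. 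Everything else is routine.
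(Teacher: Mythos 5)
Your proposal is correct and follows essentially the same route as the paper: you reduce to the two- or three-point determining subset of the smallest enclosing circle, bound each modified radius below by half of the relevant chord (and by $R$ for the non-determining indices), close the acute case via the semiperimeter inequality $\frac{a+b+c}{2}\ge 2R$ obtained from Fact~\ref{fact:trian}(a) together with \eqref{eq:sin} and \eqref{eq:triv}, and use the configuration $x_1=\cdots=x_{n-1}=z\neq x_n$ for the equality case --- exactly the content of the paper's inequality \eqref{eq:juu} and its extension to general $n$. One small correction: your parenthetical claim that replacing a vertex of the determining acute triangle by an arbitrary point ``only makes the enclosing sphere larger'' is false (replace a vertex by the circumcenter), but your argument never needs it, since the half-chord bounds plus the semiperimeter inequality --- which the paper proves by sliding the vertex along the circumcircle to a right triangle, as your plan anticipates --- already finish the acute case.
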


\begin{proof}
Let us show that the map $d_r$ satisfies the simplex inequality for $K_n=\frac{1}{n-1}$. Since $d_r$ is a continuous function, we can assume that its arguments are pairwise distinct.

Consider first the case where $n=2$. For any distinct $A, B\in \R^2$, we have $d_r(A,B)=\frac{1}{2}\, d(A,B)$, which proves that the simplex inequality holds for $n=2$.

Suppose now that $n=3$ and let us show that, for any $A,B,C,Z\in\R^2$, with $A,B,C$ pairwise distinct, we have
\begin{equation}\label{eq:juu}
2{\,}d_r(A,B,C)~\leq~ d_r(Z,B,C)+d_r(A,Z,C)+d_r(A,B,Z).
\end{equation}
Set $a=d(B,C)$, $b=d(A,C)$, and $c=d(A,B)$. By \eqref{eq:triv} we have
\begin{equation}\label{eq:trivbis}
d_r(Z,B,C)~\geq~ \frac{a}{2}~, \quad d_r(A,Z,C)~\geq~ \frac{b}{2}~, \quad d_r(A,B,Z)~\geq~ \frac{c}{2}~,
\end{equation}
and hence
\begin{equation}\label{eq:dcsdpgf}
d_r(Z,B,C)+d_r(A,Z,C)+d_r(A,B,Z)~\geq~\frac{a+b+c}{2} ~\geq ~\max\{a,b,c\}.
\end{equation}
Suppose first that $ABC$ is not acute, assuming for instance that $\beta\geq \frac{\pi}{2}$. Then $2{\,}d_r(A,B,C)=b$, and then \eqref{eq:juu} immediately follows from \eqref{eq:dcsdpgf}. Suppose now that $ABC$ is acute, with circumcircle $\mathcal{C}$, and consider the triangle $A'BC$, with sides $a$, $b'$, $c'$, such that $A'\in\mathcal{C}$ and ${\sphericalangle{}A'BC}=\frac{\pi}{2}$. By Fact \ref{fact:trian} (a) we have
$$
\frac{a+b+c}{2} ~\geq ~ \frac{a+b'+c'}{2} ~\geq ~ b' ~=~ 2{\,}d_r(A',B,C) ~=~ 2{\,}d_r(A,B,C),
$$
and then again \eqref{eq:juu} follows from \eqref{eq:dcsdpgf}. Finally, the equality is obtained in \eqref{eq:juu} by taking $A\neq B=C=Z$.

We now prove the general case where $n\geq 3$.  Let $A_1, \ldots, A_n, Z\in \R^2$, with $A_1, \ldots, A_n$ pairwise distinct. It is a known fact \cite{Chry1884} that either there are $j,k\in [n]$ such that $A_j$ and $A_k$ are distinct and
$$
S(A_1, \ldots, A_n)~=~S(A_j,A_k)
$$
or there are $j,k,\ell \in [n]$ such that $A_j$, $A_k$, and $A_\ell$ are distinct and
$$
S(A_1, \ldots, A_n)~=~S(A_j,A_k,A_\ell).
$$
Let us consider the latter case (the proof in the former case can be dealt with similarly). On the one hand, using \eqref{eq:triv} it is easy to see that
\begin{equation}\label{eqjn:predn}
d_r(A_1, \ldots, A_n)_i^Z ~\geq ~ d_r(A_1, \ldots, A_n),\qquad i\notin\{j,k,\ell\}.
\end{equation}
On the other hand, the following inequalities hold:
\begin{eqnarray*}
d_r(A_1, \ldots, A_n)_j^Z &\geq & d_r(Z,A_k,A_{\ell}),\\
d_r(A_1, \ldots, A_n)_k^Z &\geq & d_r(A_j,Z,A_{\ell}),\\
d_r(A_1, \ldots, A_n)_{\ell}^Z &\geq & d_r(A_j,A_k,Z).
\end{eqnarray*}
Indeed, $S(A_1, \ldots, A_n)_j^Z$ encloses the points $Z$, $A_k$, and $A_{\ell}$ and hence cannot have a radius strictly smaller than that of $S(Z,A_k,A_{\ell})$.

Adding up these inequalities and then using \eqref{eq:juu}, we obtain
\begin{eqnarray}
\lefteqn{d_r(A_1, \ldots, A_n)_j^Z + d_r(A_1, \ldots, A_n)_k^Z + d_r(A_1, \ldots, A_n)_{\ell}^Z}\nonumber\\
& \geq &  d_r(Z,A_k,A_{\ell}) + d_r(A_j,Z,A_{\ell}) + d_r(A_j,A_k,Z)\label{eq:3-w1}\\
& \geq & 2{\,}d_r(A_j,A_k,A_{\ell}) ~=~ 2{\,}d_r(A_1, \ldots, A_n).\nonumber
\end{eqnarray}

Combining \eqref{eqjn:predn} with \eqref{eq:3-w1}, we finally obtain
$$
\sum_{i=1}^n d_r(A_1, \ldots, A_n)_i^Z ~\geq ~ (n-1)\, d_r(A_1, \ldots, A_n),
$$
which proves that $K_n^*\leq\frac{1}{n-1}$. To prove that $K_n^*=\frac{1}{n-1}$, just consider $A_2=\cdots=A_n=Z$ and $A_1\neq A_2$.
\end{proof}

\begin{proposition}[Area bounded by $S(x_1, \ldots, x_n)$ in $\R^2$]\label{prop:area}
For any $n \geq 3$, the map $d_s\colon(\R^2)^n \to \R_+$ that associates with any $(x_1, \ldots, x_n) \in (\R^2)^n$ the surface area bounded by $S(x_1, \ldots, x_n)$ is an $n$-distance for which we have $K_n^*=(n-\frac{3}{2})^{-1}$.
\end{proposition}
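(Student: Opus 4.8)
The plan is to reduce the whole statement to the single numerical inequality
\[
R_1^2+\cdots+R_n^2~\geq~\Big(n-\tfrac32\Big)\,R^2 ,\qquad x_1,\dots,x_n,z\in\R^2,
\]
where $R$ and $R_i$ denote the radii of $S(x_1,\dots,x_n)$ and of $S(x_1,\dots,x_n)_i^z$, respectively. Indeed, multiplying through by $\pi$, this is exactly condition \eqref{eq:ntr} for $d_s$ with $K_n=(n-\tfrac32)^{-1}$; since $n\geq 3$ forces $n-\tfrac32\geq 1$, it also yields condition (i), and conditions (ii) and (iii) are immediate, so this simultaneously shows that $d_s$ is an $n$-distance and that $K_n^*\leq(n-\tfrac32)^{-1}$. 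The engine of the argument will be a three-point estimate: for all $A,B,C,Z\in\R^2$, if $R$ is the radius of $S(A,B,C)$ and $\rho_A,\rho_B,\rho_C$ are the radii of $S(Z,B,C)$, $S(A,Z,C)$, $S(A,B,Z)$, then $\rho_A^2+\rho_B^2+\rho_C^2\geq\tfrac32\,R^2$.

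To prove this estimate I would set $a=d(B,C)$, $b=d(A,C)$, $c=d(A,B)$. Since any circle enclosing two points has radius at least half their distance (cf.\ \eqref{eq:triv}), we get $\rho_A\geq a/2$, $\rho_B\geq b/2$, $\rho_C\geq c/2$, so it suffices to show $a^2+b^2+c^2\geq 6R^2$. If $ABC$ is acute, then by Fact~\ref{fact:trian}(a) $R$ is the circumradius and $a=2R\sin\alpha$, $b=2R\sin\beta$, $c=2R\sin\gamma$, so $a^2+b^2+c^2=4R^2(\sin^2\alpha+\sin^2\beta+\sin^2\gamma)$ and we only need $\sin^2\alpha+\sin^2\beta+\sin^2\gamma\geq\tfrac32$, i.e.\ $\cos2\alpha+\cos2\beta+\cos2\gamma\leq 0$: the largest of $2\alpha,2\beta,2\gamma$ lies in $[2\pi/3,\pi]$ and hence has nonpositive cosine, while the other two sum to at least $\pi$, so $\cos2\alpha+\cos2\beta=2\cos(\alpha+\beta)\cos(\alpha-\beta)\leq 0$. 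If instead $ABC$ is not acute, say $\gamma\geq\pi/2$, then $C$ lies in the closed disk with diameter $AB$, so (as in Fact~\ref{fact:trian}(b)) $S(A,B,C)$ is the circle with that diameter and $2R=c$; moreover $\cos\gamma\leq 0$, whence $c^2=a^2+b^2-2ab\cos\gamma\leq a^2+b^2+2ab\leq 2(a^2+b^2)$, giving $a^2+b^2+c^2\geq\tfrac32 c^2=6R^2$.

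For the general inequality, fix $x_1,\dots,x_n,z$. By the classical fact of \cite{Chry1884} already used in the proof of Proposition~\ref{prop:radius}, there are $j,k,\ell\in[n]$ with $S(x_1,\dots,x_n)=S(x_j,x_k,x_\ell)$ (when the smallest enclosing circle is determined by only two of the points, one adjoins any third index, whose point then lies in the associated disk, so the possibly degenerate triangle $x_jx_kx_\ell$ falls under the second case above). For every $i\notin\{j,k,\ell\}$ the circle $S(x_1,\dots,x_n)_i^z$ still encloses $\{x_j,x_k,x_\ell\}$, hence $R_i\geq R$; and $S(x_1,\dots,x_n)_j^z$, $S(x_1,\dots,x_n)_k^z$, $S(x_1,\dots,x_n)_\ell^z$ enclose $\{z,x_k,x_\ell\}$, $\{x_j,z,x_\ell\}$, $\{x_j,x_k,z\}$ respectively, so the three-point estimate applied to $A=x_j$, $B=x_k$, $C=x_\ell$ and the point $z$ yields $R_j^2+R_k^2+R_\ell^2\geq\tfrac32 R^2$. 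Summing over $i\in[n]$ gives $\sum_i R_i^2\geq(n-3)R^2+\tfrac32 R^2=(n-\tfrac32)R^2$, as required.

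Finally, to see that the constant is sharp, I would take $x_1=(-1,0)$, $x_2=(1,0)$, $x_3=\cdots=x_n=(0,0)$ and $z=(0,0)$: since $(0,0)$ lies on the segment joining $x_1$ and $x_2$, the circle $S(x_1,\dots,x_n)$ has radius $1$ and $d_s(x_1,\dots,x_n)=\pi$, whereas replacing $x_1$ or $x_2$ by $z$ gives a circle of radius $\tfrac12$ (area $\tfrac\pi4$) and replacing any $x_i$ with $i\geq 3$ by $z$ changes nothing, so $\sum_{i=1}^n d_s(x_1,\dots,x_n)_i^z=\tfrac\pi4+\tfrac\pi4+(n-2)\pi=\pi\,(n-\tfrac32)$; thus no $K<(n-\tfrac32)^{-1}$ can work and $K_n^*=(n-\tfrac32)^{-1}$. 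The only genuinely non-routine point will be the three-point inequality $a^2+b^2+c^2\geq 6R^2$ — in particular its acute case, which boils down to $\sin^2\alpha+\sin^2\beta+\sin^2\gamma\geq\tfrac32$; everything else parallels, up to squaring, the counting already carried out for $d_r$ in Proposition~\ref{prop:radius}.
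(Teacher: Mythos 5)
Your proposal is correct and follows essentially the same route as the paper: both reduce, via the half-side bounds $\rho_A\geq a/2$, $\rho_B\geq b/2$, $\rho_C\geq c/2$, to the three-point inequality $a^2+b^2+c^2\geq 6R^2$ split into acute and non-acute cases, lift it to general $n$ through Chrystal's two-or-three support-point characterization of the smallest enclosing circle exactly as in Proposition~\ref{prop:radius}, and certify sharpness with the same midpoint configuration $x_3=\cdots=x_n=z=(x_1+x_2)/2$. The only difference is a cosmetic one in the acute case, where you prove $\sin^2\alpha+\sin^2\beta+\sin^2\gamma\geq\frac32$ via double angles while the paper bounds $R$ using the largest angle $\alpha\in[\pi/3,\pi/2]$ together with the law of cosines.
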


\begin{proof}
Let us show that the map $d_s=\pi{\,}d_r^2$ satisfies the simplex inequality with constant $K_n=(n-\frac{3}{2})^{-1}$. Since $d_r$ is continuous, we can assume that its arguments are pairwise distinct.

Consider first the case where $n=3$ and let us show that, for any $A,B,C,Z \in \R^2$, with $A,B,C$ pairwise distinct, we have
\begin{equation}\label{eq:sqr}
d_r(A,B,C)^2 ~\leq~ \frac{2}{3}\, \big(d_r(Z,B,C)^2+d_r(A,Z,C)^2+d_r(A,B,Z)^2\big).
\end{equation}

If the triangle $ABC$ is acute, then we may assume for instance that $\frac{\pi}{3}\leq\alpha\leq\frac{\pi}{2}$, which implies $\frac{\sqrt{3}}{2}\leq\sin\alpha\leq 1$. Using \eqref{eq:sin}, we then have
\begin{equation}\label{eq:bftt}
%\textstyle
d_r(A,B,C)^2~\leq~\frac{a^2}{3}~\leq~\frac{2}{3}\, (\frac{a^2}{4}+\frac{a^2}{4})~\leq~\frac{2}{3}\, (\frac{a^2}{4}+\frac{b^2}{4}+\frac{c^2}{4}),
\end{equation}
where the latter inequality holds by the law of cosines. We then obtain \eqref{eq:sqr} by combining \eqref{eq:trivbis} with \eqref{eq:bftt}.

If $ABC$ is obtuse in $C$, then $d_r(A,B,C)=\frac{c}{2}$. Using the triangle inequality and the square and arithmetic mean inequality, we also have
$$
\frac{a^2+b^2}{2}~\geq ~\Big(\frac{a+b}{2}\Big)^2~\geq ~\frac{c^2}{4}{\,}.
$$
Combining these observations with \eqref{eq:trivbis}, we obtain
\begin{eqnarray*}\label{eq:fer}
\lefteqn{\frac{2}{3}\,  \big(d_r(Z,B,C)^2+d_r(A,Z,C)^2+d_r(A,B,Z)^2\big)}\\
& \geq & \frac{2}{3}\, \Big(\frac{a^2}{4}+\frac{b^2}{4}+\frac{c^2}{4}\Big) ~\geq ~ \frac{2}{3}~\frac{3}{8}\, c^2 ~=~ \Big(\frac{c}{2}\Big)^2~=~\,d_r(A,B,C)^2.
\end{eqnarray*}

To see that the general case where $n\geq 3$ also holds, it suffices to proceed as in the proof of Proposition~\ref{prop:radius}. This shows that $K_n^*\leq (n-\frac{3}{2})^{-1}$. To prove that $K_n^*=(n-\frac{3}{2})^{-1}$, just consider $A_1\neq A_2$ and $A_3= \cdots = A_n=Z=({A_1+A_2})/{2}$, where $({A_1+A_2})/{2}$ is the midpoint of $A_1$ and $A_2$.
\end{proof}

\begin{remark}
The map $d_s$ defined in Proposition~\ref{prop:area} can be naturally extended to the case where $n=2$. However, in this case $d_s$ no longer satisfies condition (i) and hence is not a $2$-distance. Indeed, for any $A,B,Z \in \R^2$, with $A,B$ distinct, we have
$$
d_s(A,B) ~\leq ~ 2{\,}\big(d_s(A,Z)+d_s(Z,B)\big),
$$
or equivalently,
$$
d(A,B)^2 ~\leq ~ 2{\,}d(A,Z)^2+2{\,}d(Z,B)^2,
$$
where the constant $2$ is optimal (take $A$ and $B$ distinct and $Z=(A+B)/2$). To see that this inequality holds, set $A=(0,0)$, $B=(b,0)$, and $Z=(x,y)$. Then, the inequality becomes
$$
b^2 ~\leq ~ 2{\,}(x^2+y^2)+2{\,}(x-b)^2+2{\,}y^2,
$$
which always holds because it is algebraically equivalent to
$$
(2x-b)^2+4y^2 ~\geq ~0.
$$
\end{remark}

\begin{remark}\label{rem:conj}
In an attempt to generalize the previous two propositions to $\R^k$ ($k\geq 2$), we may consider the following open questions:
\begin{enumerate}
  \item[(a)] Prove (or disprove) that Proposition~\ref{prop:radius} still holds in $\R^k$.
  \item[(b)] Prove (or disprove) that, for any $n \geq 3$, the map $d_v\colon(\R^k)^n \to \R_+$ that associates with any $(x_1, \ldots, x_n) \in (\R^k)^n$ the $k$-dimensional volume bounded by $S(x_1, \ldots, x_n)$ is an $n$-distance for which we have $K_n^*=(n-2+2^{1-k})^{-1}$.
\end{enumerate}
Note that the problem in (b) above is motivated by the fact that the corresponding simplex inequality with $K_n=(n-2+2^{1-k})^{-1}$ holds when $x_1$ and $x_2$ are distinct and $x_3=\cdots=x_n=z$ is the midpoint of $x_1$ and $x_2$.
\end{remark}

We now show that counting the number of different directions defined by pairs of distinct elements among $n$ points in the plane defines an $n$-distance.

For any distinct $x,y\in \R^2$, we denote by $\overline{xy}$ the direction $\pm (x-y)/||x-y||$. Here we assume that $\overline{xy}$ and $\overline{yx}$ represent the same direction.

\begin{proposition}[Number of directions in $\R^2$]\label{prop:dir}
For any $n\geq 3$, the map $d_n\colon (\R^2)^n \to \R_+$ that associates with any $(x_1, \ldots, x_n)\in (\R^2)^n$ the cardinality $|\Delta|$ of the set
$$
\Delta ~=~ \big\{\overline{x_ix_j} \mid i,j\in [n]~\text{and} ~ x_i\neq x_j\big\}
$$
is an $n$-distance for which we have
$
\frac{1}{n-2+\frac{2}{n}} \leq K_n^* < \frac{1}{n-2}{\,}.
$
% Moreover $K^*_n$ is asymptotically equivalent to $\frac{1}{n-2}$ as $n\to\infty$.
\end{proposition}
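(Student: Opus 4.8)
The plan is to first verify that $d_n$ is an $n$-distance and then to extract both bounds on $K_n^*$ from one counting estimate. Conditions (ii) and (iii) are immediate: $\Delta$ does not depend on the order of the arguments, and $d_n(x_1,\ldots,x_n)=|\Delta|=0$ holds exactly when no two of $x_1,\ldots,x_n$ are distinct, i.e. when $x_1=\cdots=x_n$. Now fix $x_1,\ldots,x_n,z\in\R^2$. For $i\in[n]$, let $\Delta_i^z$ denote the set of directions determined by $(x_1,\ldots,x_n)_i^z$ and let $\Delta^{(i)}$ denote the set of directions determined by the $n-1$ points $\{x_j:j\neq i\}$, so that $\Delta^{(i)}\subseteq\Delta_i^z$. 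The key observation is that if $\delta=\overline{x_jx_k}$ with $x_j\neq x_k$, then $\delta\in\Delta^{(i)}$ whenever $i\notin\{j,k\}$; hence each $\delta\in\Delta$ lies in at least $n-2$ of the sets $\Delta^{(1)},\ldots,\Delta^{(n)}$, and summing over $\delta$,
\[
\sum_{i=1}^n d_n(x_1,\ldots,x_n)_i^z ~=~ \sum_{i=1}^n|\Delta_i^z| ~\geq~ \sum_{i=1}^n|\Delta^{(i)}| ~\geq~ (n-2)\,|\Delta|.
\]
Since $n\geq 3$, this already proves condition (i) and gives $K_n^*\leq\frac{1}{n-2}$.

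The next step is to sharpen this to $\sum_{i=1}^n|\Delta_i^z|\geq (n-2)\,|\Delta|+1$ whenever $|\Delta|\geq 1$. Since $t\mapsto t/((n-2)t+1)$ is increasing on $\R_+$ and $|\Delta|\leq\binom{n}{2}$, the sharpened bound shows that the simplex inequality holds with the constant $\binom{n}{2}\big/\big((n-2)\binom{n}{2}+1\big)$, which lies in $\left]0,1\right]$ and is strictly smaller than $\frac{1}{n-2}$; hence $K_n^*<\frac{1}{n-2}$. To prove the sharpened bound, suppose $\sum_i|\Delta_i^z|=(n-2)|\Delta|$ with $|\Delta|\geq 1$. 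Then the whole chain above consists of equalities, so (a) every $\delta\in\Delta$ belongs to exactly $n-2$ of the sets $\Delta^{(i)}$, and (b) $\Delta_i^z=\Delta^{(i)}$ for all $i$. I would first analyse, for a fixed $\delta\in\Delta$, which one-point deletions destroy it: a deletion removes $\delta$ only when the line parallel to $\delta$ carrying at least two distinct $x_i$ is unique, contains exactly two of the points, and each of these two points appears once among $x_1,\ldots,x_n$, and in that case the destroying deletions are exactly those two points. Consequently, (a) forces the $x_i$ to be pairwise distinct, no three of them collinear, and the $\binom{n}{2}$ directions $\overline{x_ix_j}$ pairwise distinct. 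But this structure contradicts (b): if $z=x_m$, then $\Delta_m^z=\Delta$, which strictly contains $\Delta^{(m)}$; and if $z\notin\{x_1,\ldots,x_n\}$, then for any fixed $m$ the direction $\overline{zx_m}$ would lie in $\Delta^{(i)}$ for every $i\neq m$, hence (by uniqueness of representations) be realized by one fixed pair $\{x_a,x_b\}$ with $\{a,b\}\cap\big([n]\setminus\{m\}\big)=\emptyset$, which is impossible since $\{a,b\}$ has two elements.

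For the matching lower bound I would exhibit a configuration realizing the ratio $\frac{1}{n-2+2/n}$. Choose $x_1,\ldots,x_n$ in general position in the above sense --- for instance sufficiently generic points on a parabola, which automatically excludes three collinear ones --- and put $z=x_1$. Then $\Delta_1^z=\Delta$ has $\binom{n}{2}$ elements, whereas for $i\geq 2$ one has $\Delta_i^z=\Delta^{(i)}$ with $\binom{n-1}{2}$ elements, so
\[
\frac{d_n(x_1,\ldots,x_n)}{\sum_{i=1}^n d_n(x_1,\ldots,x_n)_i^z} ~=~ \frac{\binom{n}{2}}{\binom{n}{2}+(n-1)\binom{n-1}{2}} ~=~ \frac{n}{n^2-2n+2} ~=~ \frac{1}{\,n-2+\frac{2}{n}\,}\,,
\]
and therefore $K_n^*\geq\frac{1}{n-2+2/n}$.

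The delicate point, and the only place where real work is needed, is the characterization inside the sharpened bound: one has to pin down exactly which single-point deletions remove a given direction and then combine this with the uniqueness of the representation $\delta=\overline{x_ax_b}$ to exclude the equality $\sum_i|\Delta_i^z|=(n-2)|\Delta|$. Everything else is bookkeeping, and the value $\frac{1}{n-2+2/n}$ is accounted for by the single ``untouched'' term $\Delta_1^z=\Delta$ in the configuration above, which is exactly the excess of $\sum_i|\Delta_i^z|$ over $(n-2)|\Delta|$.
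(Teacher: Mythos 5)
Your proof is correct, and for the two main ingredients it follows the same skeleton as the paper: the count showing that each direction of $\Delta$ survives at least $n-2$ of the single-point deletions, which gives condition (i) and $K_n^*\le\frac{1}{n-2}$, and the extremal configuration ($z$ equal to one of $n$ points in general position) which gives $K_n^*\ge\frac{1}{n-2+\frac{2}{n}}$. Where you genuinely diverge is the strictness step. The paper assumes a configuration with $(n-2)\,d_n=\sum_i d_n(\cdots)_i^z$, deduces that no three points are collinear, and then argues (for $n\ge 4$, with $n=3$ declared ``similar'') that $\{x_2,\ldots,x_n\}$ and $\{z,x_2,\ldots,x_n\}$ generating equally many directions forces $z=x_\ell$, a rather terse geometric step. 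You instead sharpen the count to $\sum_i|\Delta_i^z|\ge(n-2)|\Delta|+1$ whenever $|\Delta|\ge 1$, by classifying exactly which deletions destroy a given direction (which forces pairwise distinct points, no three collinear, and uniquely represented directions) and then deriving the contradiction from the uniqueness of the pair representing $\overline{zx_m}$; this treats the cases $z\in\{x_1,\ldots,x_n\}$ and $z\notin\{x_1,\ldots,x_n\}$ and all $n\ge 3$ uniformly. Your route buys two things the paper leaves implicit: an explicit constant $\binom{n}{2}\big/\bigl((n-2)\binom{n}{2}+1\bigr)<\frac{1}{n-2}$ for which the simplex-type inequality holds, and a clean justification that non-attainment of equality really yields $K_n^*<\frac{1}{n-2}$ (the paper tacitly relies on the ratio $d_n/\sum_i d_n(\cdots)_i^z$ taking only finitely many values). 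One small point you should make fully explicit: the lower-bound configuration needs all $\binom{n}{2}$ directions pairwise distinct, so ``sufficiently generic points on a parabola'' should be instantiated, e.g.\ by the points $(t_i,t_i^2)$ with the sums $t_i+t_j$ pairwise distinct, since the chord through $(t_i,t_i^2)$ and $(t_j,t_j^2)$ has slope $t_i+t_j$; note that the paper's own circle construction carries the same unstated genericity requirement.
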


\begin{proof}
Let $x_1, \ldots, x_n, z \in \R^2$. For any $i\in [n]$, let
$$
\Delta_i ~=~ \{\overline{x_jx_k} \mid j,k\in [n]\setminus\{i\}~\text{and} ~ x_j\neq x_k\}.
$$
On the one hand, we clearly have $|\Delta_i|\leq d_n(x_1, \ldots, x_n)_i^z$ for every $i\in [n]$. On the other hand, it is easy to see that each direction in $\Delta$ is counted at least $(n-2)$ times in the sum $\sum_{i=1}^n|\Delta_i|$. From these observations it follows that
\begin{equation}\label{eqn:gdt}
(n-2){\,}d_n(x_1, \ldots, x_n) ~=~ (n-2){\,}|\Delta| ~\leq ~ \sum_{i=1}^n|\Delta_i|~\leq ~ \sum_{i=1}^n d_n(x_1, \ldots, x_n)_i^z,
\end{equation}
which proves that $K_n^*\leq \frac{1}{n-2}$.

We now show by contradiction that the latter inequality is strict. Assume that there exist $x_1, \ldots, x_n, z\in \R^2$ such that
$$
(n-2) \, d_n(x_1, \ldots, x_n) ~=~\sum_{i=1}^n d_n(x_1, \ldots, x_n)_i^z.
$$
It follows that for these points we can replace both inequalities in \eqref{eqn:gdt} with equalities. The first equality then means that each direction in $\Delta$ is counted exactly $(n-2)$ times in the sum $\sum_{i=1}^n|\Delta_i|$. It is easy to see that this condition also means that no three of the points $x_1,\ldots,x_n$ are collinear. % In particular these points are pairwise distinct.
%Since only directions matter, we can therefore assume that these points are pairwise distinct and placed clockwise on the unit circle.
Let us now consider the second inequality. Since $|\Delta_i|\leq d_n(x_1, \ldots, x_n)_i^z$ for every $i\in [n]$, we must have $|\Delta_i|= d_n(x_1, \ldots, x_n)_i^z$ for every $i\in [n]$.
%Since $n\geq 3$, it is not possible to find a point $z\in\R^2$ satisfying the latter condition.
Suppose first that $n\geq 4$. It follows from the latter condition that both sets $\{x_2,\ldots,x_n\}$ and $\{z,x_2,\ldots,x_n\}$ generate the same number of directions. Since no three of the points $x_2,\ldots,x_n$ are collinear, we should have $z=x_{\ell}$ for some $\ell\in\{2,\ldots,n\}$. But then we have $|\Delta_{\ell}|< d_n(x_1, \ldots, x_n)_{\ell}^z$, a contradiction. A similar contradiction can be easily reached when $n=3$.

Let us now establish the lower bound for $K_n^*$. Let $x_1, \ldots, x_n$ be pairwise distinct and placed clockwise on the unit circle. Let also $z=x_1$. Then we have
\[
d_n(x_1, \ldots, x_n)~=~{n\choose 2} \quad\text{and} \quad d_n(x_1, \ldots, x_n)_{i}^z~=~
\begin{cases}
{n\choose 2} & \text{if $i=1$},\\
{n-1\choose 2} & \text{if $i\neq 1$},
\end{cases}
\]
and hence
\begin{eqnarray*}
\sum_{i=1}^n d_n(x_1, \ldots, x_n)_{i}^z &=& {n\choose 2}+(n-1){n-1\choose 2} ~=~ \big(n-2+\frac{2}{n}\big){\,}{n\choose 2}\\
&=& \big(n-2+\frac{2}{n}\big){\,}d_n(x_1, \ldots, x_n),
\end{eqnarray*}
which completes the proof.
%Let us now establish the asymptotic behavior. Let $m\geq 1$ be an integer and let $x_1, \ldots, x_m$ be any points on the unit circle such that the $m$ directions $\overline{0x_1}$, \ldots, $\overline{0x_m}$ are distinct. Let $z=0$ and, for every $k\in [m]$, let $x_{m+k}$ be the symmetric of $x_k$ with respect to $0$. Setting $n=2m$, it is then clear that
%\[
%d_n(x_1, \ldots, x_n)~=~{n\choose 2} \quad\text{and} \quad d_n(x_1, \ldots, x_n)_{i}^z~=~{n-1\choose 2}+1.
%\]
%We then have
%\[
%(n-2)~\leq ~\frac{1}{K_n^*}~\leq ~\frac{n{n-1\choose 2}+n}{{n\choose 2}} ~=~ (n-2)+\frac{2}{n-1}~,
%\]
%from which we immediately derive $\lim_{n\to \infty} (n-2)K^*_n=1$.
\end{proof}

\begin{remark}
An $n$-distance $d\colon (\R^k)^n\to\R_+$ is said to be \emph{homogeneous of degree $q\geq 0$} if, for any $t>0$, we have
$$
d(tx_1,\ldots,tx_n) ~=~ t^q{\,} d(x_1,\ldots,x_n){\,},\qquad x_1,\ldots,x_n\in \R^k.
$$
This means that under any dilation $x\mapsto tx$, the $n$-distance $d$ is magnified by the factor $t^q$. Since a distance on $\R^k$ usually represents a linear dimension, we could expect any $n$-distance on $\R^k$ to be homogeneous of degree 1. This is for instance the case for the $n$-distance defined in Proposition~\ref{prop:radius}. Surprisingly enough, the $n$-distances defined in Examples~\ref{ex:dra}, \ref{ex:car}, and Proposition~\ref{prop:dir} are homogeneous of degree $0$, that is, invariant under any dilation. Also, the $n$-distance defined in Proposition~\ref{prop:area} is homogeneous of degree $2$.
\end{remark}

%----------------------------------------------------------------------------------------------
\section{A generalization of the concept of $n$-distance}\label{sec:g}

The concept of $n$-distance as defined in Definition~\ref{de:n-dist} can naturally be generalized by relaxing condition (i) as follows.

\begin{definition}
Let $g\colon\R^n_+\to\R_+$ be a symmetric function, i.e., invariant under any permutation of its arguments. We say that a function $d\colon X^n\to\R_+$ is a \emph{$g$-distance} if it
satisfies conditions (ii), (iii), and
$$
d(x_1,\ldots,x_n) ~\leqslant ~ g\big(d(x_1,\ldots,x_n)_{1}^z{\,},\ldots,{\,}d(x_1,\ldots,x_n)_{n}^z\big)
$$
for all $x_1,\ldots,x_n,z\in X$.
\end{definition}

In view of Proposition~\ref{prop:buit-d}, it is natural to require $d+d'$, $\lambda{\,}d$, and $\frac{d}{1+d}$ to be $g$-distances whenever so are $d$ and $d'$. The following proposition provides sufficient conditions on $g$ for these properties to hold. Recall that a function $g\colon \R^n_+ \to \R$ is \emph{positively homogeneous} if $g(\lambda\, \mathbf{r})=\lambda\, g(\mathbf{r})$ for all $\mathbf{r} \in \R^n_+$ and all $\lambda >0$. It is said to be \emph{superadditive} if $g(\mathbf{r}+\mathbf{s})\geq g(\mathbf{r})+g(\mathbf{s})$ for every $\mathbf{r}, \mathbf{s}\in \R^n_+$. Also, it is \emph{additive} if $g(\mathbf{r}+\mathbf{s})=g(\mathbf{r})+g(\mathbf{s})$ for every $\mathbf{r}, \mathbf{s}\in \R^n_+$.

\begin{proposition}\label{prop:gen}
Let $g\colon\R^n_+\to\R_+$ be a symmetric function, and let $d,d'\colon X^n \to \R_+$ be $g$-distances. The following
assertions hold.
\begin{enumerate}
\item[(a)] If $g$ is positively homogeneous, then $\lambda\, d$ is a $g$-distance for every $\lambda >0$.
\item[(b)] If $g$ is superadditive, then $d+d'$ is a $g$-distance.
\item[(c)] If $g$ is both positively homogeneous and superadditive, then it is concave.
\item[(d)] The function $g$ is additive if and only if there exists $\lambda\geq 0$ such that
    \begin{equation}\label{eq:add}
    g(\mathbf{r}) ~=~ \lambda{\,}\sum_{i=1}^n r_i{\,}, \qquad \mathbf{r}=(r_1,\ldots,r_n)\in \R^n_+.
    \end{equation}
\item[(e)] If $g$ satisifies \eqref{eq:add} for some $\lambda\geq 1$, then $\frac{d}{1+d}$ is a $g$-distance.
\end{enumerate}
\end{proposition}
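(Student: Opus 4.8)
The five assertions are largely independent, so I would treat them in the order (a), (b), (d), (e), (c), handling first the ones that only require unwinding the definition of $g$-distance, and saving the structural fact (c) for last.

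For (a), let $\lambda>0$ and suppose $d$ is a $g$-distance. Conditions (ii) and (iii) are immediate for $\lambda\,d$. For the relaxed simplex inequality, fix $x_1,\dots,x_n,z\in X$; since $(\lambda\,d)(x_1,\dots,x_n)_i^z=\lambda\,d(x_1,\dots,x_n)_i^z$, positive homogeneity of $g$ gives
\[
g\big(\lambda\,d(\dots)_1^z,\dots,\lambda\,d(\dots)_n^z\big) ~=~ \lambda\,g\big(d(\dots)_1^z,\dots,d(\dots)_n^z\big) ~\geq~ \lambda\,d(x_1,\dots,x_n),
\]
which is exactly what is needed. For (b), the argument is the same but uses superadditivity: $(d+d')(x_1,\dots,x_n)_i^z = d(\dots)_i^z + d'(\dots)_i^z$, so $g$ applied to the vector of these sums is at least $g$ of the $d$-vector plus $g$ of the $d'$-vector, each of which dominates the corresponding left-hand side; conditions (ii) and (iii) for $d+d'$ are clear (here (iii) uses that both summands are nonnegative and vanish simultaneously only when $x_1=\cdots=x_n$). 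For (d), the ``if'' direction is a trivial check that $\mathbf r\mapsto\lambda\sum r_i$ is additive. For ``only if,'' additivity of a function on $\R_+^n$ together with the (implicit) monotonicity or at least the structural constraints — actually, one only needs that $g$ is additive and defined on the cone $\R_+^n$: restricting to each coordinate axis, $t\mapsto g(t\,e_i)$ is additive on $\R_+$, and symmetry forces these one-variable functions to coincide; one then needs the standard fact that an additive function $\R_+\to\R_+$ is of the form $t\mapsto\lambda t$ — this holds without a regularity hypothesis precisely because the function is nonnegative (a nonnegative additive function on $\R_+$ is automatically monotone, hence linear). Writing $\lambda=g(e_1)$ and using additivity to decompose $\mathbf r=\sum_i r_i e_i$ finishes it. For (e), if $g(\mathbf r)=\lambda\sum_i r_i$ with $\lambda\geq 1$, then since $d$ is a $g$-distance we have $d(x_1,\dots,x_n)\leq \lambda\sum_{i=1}^n d(\dots)_i^z$, and a fortiori (as $\lambda\geq1$) $d(x_1,\dots,x_n)\leq\sum_{i=1}^n d(\dots)_i^z$ fails in general — wait, rather we must be careful: what we get directly is only the inequality with the factor $\lambda$. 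Instead I would apply Lemma~\ref{lem:sum} with $a=d(x_1,\dots,x_n)$ and $a_i=d(\dots)_i^z$ to get $\frac{a}{1+a}\leq\sum_i\frac{a_i}{1+a_i}$, but Lemma~\ref{lem:sum} requires $a\leq\sum a_i$, which we do \emph{not} have when $\lambda>1$. The correct route: since $g(\mathbf r)=\lambda\sum r_i\geq\sum r_i$, it suffices to show $\frac{a}{1+a}\leq\lambda\sum_i\frac{a_i}{1+a_i}$ under the hypothesis $a\leq\lambda\sum_i a_i$; setting $a_i'=\lambda a_i$ we have $a\leq\sum a_i'$, so Lemma~\ref{lem:sum} gives $\frac{a}{1+a}\leq\sum_i\frac{a_i'}{1+a_i'}=\sum_i\frac{\lambda a_i}{1+\lambda a_i}\leq\lambda\sum_i\frac{a_i}{1+a_i}$, the last step because $\frac{\lambda t}{1+\lambda t}\leq\lambda\frac{t}{1+t}$ for $\lambda\geq1$, $t\geq0$. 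Conditions (ii), (iii) for $\frac{d}{1+d}$ are inherited from $d$.

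For (c), assume $g$ is positively homogeneous and superadditive. Positive homogeneity gives $g(\mathbf r+\mathbf s)$ homogeneous of degree $1$ jointly, and for $\mathbf r,\mathbf s\in\R_+^n$ and $t\in[0,1]$ write $t\mathbf r+(1-t)\mathbf s$; superadditivity yields $g(t\mathbf r+(1-t)\mathbf s)\geq g(t\mathbf r)+g((1-t)\mathbf s)=t\,g(\mathbf r)+(1-t)\,g(\mathbf s)$, where the last equality is positive homogeneity. That is precisely concavity on the cone $\R_+^n$. (No continuity assumption is needed; superadditivity plus positive homogeneity force concavity outright.)

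**Main obstacle.** The only genuinely delicate point is (d): proving that a nonnegative additive function on the cone $\R_+^n$ must be linear, without assuming measurability or continuity. The key observation that makes this work — and which I would state explicitly — is that nonnegativity upgrades additivity to monotonicity (if $s\geq t$ then $g(s\,e_i)=g(t\,e_i)+g((s-t)e_i)\geq g(t\,e_i)$), and a monotone additive function on $\R_+$ is linear by the classical argument (it is $\Q_+$-linear by additivity, then monotonicity pins down the irrational values by squeezing). Part (e) also hides a small trap, as indicated above: one cannot invoke Lemma~\ref{lem:sum} directly when $\lambda>1$, and must first absorb $\lambda$ into the $a_i$ and then use the elementary inequality $\frac{\lambda t}{1+\lambda t}\leq\lambda\,\frac{t}{1+t}$.
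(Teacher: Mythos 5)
Your proof is correct and, for parts (a), (b), (c), and (e), follows essentially the same route as the paper: in particular your treatment of (e) — absorbing $\lambda$ into the $a_i$, applying Lemma~\ref{lem:sum}, and then using $\frac{\lambda t}{1+\lambda t}\leq\lambda\frac{t}{1+t}$ for $\lambda\geq 1$ — is exactly the paper's argument, and your (c) is the paper's one-line argument with the inequality in the correct (concavity) direction, whereas the paper's displayed inequality contains a sign typo. The only genuine divergence is in (d): the paper simply cites Acz\'el--Dhombres (an additive function bounded from below is continuous, hence of the form $\sum_i\lambda_i r_i$) and then invokes symmetry, while you give a self-contained elementary proof by restricting $g$ to the coordinate rays, observing that nonnegativity upgrades additivity to monotonicity, and using $\Q_+$-linearity plus monotonicity to pin down each one-variable function as $t\mapsto\lambda t$; this buys independence from the functional-equations reference at the cost of a slightly longer argument, and both are valid.
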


\begin{proof}
(a) and (b) follow from the definitions.

(c) For any $\lambda\in [0,1]$, we have \[g(\lambda \mathbf{r}+(1-\lambda)\mathbf{s})~\leq~g(\lambda \mathbf{r})+g((1-\lambda)\mathbf{s})~=~\lambda
g(\mathbf{r})+(1-\lambda)g(\mathbf{s}),\]
where the inequality follows from superadditivity and the equality from positive homogeneity.

(d) The sufficiency is trivial. To see that the necessity holds, note that $g$ is additive and bounded from below (since it ranges in $\R_+$) and hence it is continuous and there exist $\lambda_1,\ldots,\lambda_n\in\R$ such that $g(\mathbf{r}) = \sum_{i=1}^n \lambda_ir_i$ ; see \cite[Cor.~2, p.~35]{Aczel2008}. The result then follows from the symmetry of $g$.

(e) Let $x_1, \ldots, x_n, z\in X$ and set $d=d(x_1, \ldots, x_n)$ and $d_i=d(x_1, \ldots, x_n)_i^z$ for every $i\in [n]$. Since $\lambda \geq 1$, we have $\lambda\, r/(1+\lambda\, r) \leq \lambda\, r/(1+ r)$ for every $r\geq 0$. It then follows that
\[
\frac{1}{1+d}~\leq~\sum_{i=1}^n \frac{\lambda\, d_i}{1+\lambda\, d_i}~\leq~\sum_{i=1}^n \frac{\lambda\, d_i}{1+d_i}{\,},
\]
where the first inequality follows from Lemma \ref{lem:sum} and the fact that $d$ is a $g$-distance.
\end{proof}

%----------------------------------------------------------------------------------------------
\section{Conclusion and further research}

In this paper we have introduced and discussed the concept of $n$-distance as a natural generalization of the concept of distance to functions of $n\geq 2$ variables. There are two key features in this generalization: one is an $n$-ary version of the identity of indiscernibles, and the other is the simplex inequality, which is a natural generalization of the triangle inequality. We have observed that any $n$-distance $d$ has an associated best constant $K_n^*\in\left]0,1\right]$ satisfying inequality \eqref{eq:ntr}. Also, we have provided many natural examples of $n$-distances, and have shown that searching for their associated best constant may be mathematically challenging and may sometimes require subtle arguments. The examples we have discussed might suggest that we have $K_n^*<1$ for any $n$-distance. The following example, which was communicated to us by Roberto Ghiselli Ricci \cite{Ricci2016}, shows that this is not the case.

\begin{example}
Let $n\geq 3$ and $a\in\R$. Let also $\mathcal{A}(a,n)$ be the set of $n$-tuples whose components are consecutive elements of arithmetic progressions with common difference $a$. Consider the map $d_n\colon \R^n \to \R_+$ defined as
\[
d_n(x_1, \ldots, x_n)=\left\{\begin{array}{ll}
0 & \text{ if } x_1=\cdots=x_n,\\
1 & \text { if } (x_1, \ldots, x_n)\in \mathcal{A}(a,n) \text{ for some } a\neq 0,\\
\frac{1}{n} & \text{ otherwise}.%$~$(this case occurs only if $n\geq 3$)}.
\end{array}\right.
\]
We prove that $d_n$ is an $n$-distance for which we have $K_n^*=1$. Conditions (ii) and (iii) are easily verified. To see that condition (i) holds, consider $x_1, \ldots, x_n, z\in \R$. First assume that $d_n(x_1, \ldots, x_n)=\frac{1}{n}$. There is at most one $i\in [n]$ such that $d_n(x_1, \ldots, x_n)_i^z=0$. Thus, we obtain
\[
\sum_{i=1}^n d_n(x_1, \ldots, x_n)_{i}^z~\geq~\frac{n-1}{n}~\geq~d_n(x_1, \ldots, x_n).
\]
Assume now that $d(x_1, \ldots, x_n)=1$. It follows that  $d_n(x_1, \ldots, x_n)_i^z\geq\frac{1}{n}$ for all $i\in [n]$, which shows that the simplex inequality holds in that case as well. To prove that $K_n^*=1$, just consider $x_1=1, x_2=2, \ldots, x_n=n$, and $z=-1$.\qed
\end{example}

We also observe that certain $n$-distances cannot be constructed from the concept of multidistance as defined by Mart\'{\i}n and Mayor~\cite{MarMay11} (see Remark~\ref{rem:multdist}). Instances of such $n$-distances are given, e.g., in Propositions~\ref{prop:area} and \ref{prop:dir}.

We conclude this paper by proposing a few topics for further research.
\begin{enumerate}
\item[(a)] Improve the bounds for the best constant associated with the Fermat $n$-distance (at least in some given proper metric spaces).

\item[(b)] Consider and solve the problems stated in Remark~\ref{rem:conj}.

\item[(c)] Investigate properties of topological spaces based on $n$-metric spaces. On this issue we observe that in \cite{Mustafa2006} the authors introduced a stronger version of $3$-metric space called \emph{$G$-metric space} (see also \cite{Khamsi2015}). It is shown that there is a natural metric space associated with any $G$-metric space. Finding an appropriate generalization of the notion of $G$-metric space as a stronger version of $n$-metric space and investigating its topological properties seems to be an interesting topic of research.
\end{enumerate}

%---------------------------------------------------------------------------------------------- Acknowledgments
\section*{Acknowledgments}

This research is supported by the internal research project R-AGR-0500 of the University of Luxembourg.


\begin{thebibliography}{99}

\bibitem{Aczel2008}
J. Acz\'el and J. Dhombres.
\newblock {\em Functional equations in several variables}.
\newblock Encyclopedia of Mathematics and its Applications, vol. 31. Cambridge University Press, Cambridge, 2008.

\bibitem{AguMarMaySunVal12}
I.~Aguil\'o, J.~Mart\'{\i}n, G.~Mayor, J.~Su\~{n}er, and O.~Valero.
\newblock Smallest enclosing ball multidistance.
\newblock {\em Communications in Information and Systems} 12(3):185--194, 2012.

\bibitem{BolMarSol99}
V.~Boltyanski, H.~Martini, and V.~Soltan.
\newblock {\em Geometric Methods and Optimization Problems}.
\newblock Kluwer Academic Publishers, Dordrecht, 1999.

\bibitem{Chry1884}
G.~Chrystal.
\newblock On the problem to construct the minimum circle enclosing $n$ given points in a plane.
\newblock {\em Proc. of the Edinburgh Math. Soc.}, Vol. 3, pp. 30-33, 1884.

\bibitem{Deza2014}
M.~M.~Deza and E.~Deza.
\newblock \emph{Encyclopedia of distances}, third edition.
\newblock Springer, 2014.

\bibitem{Deza2003}
M.~M.~Deza and M.~Dutour.
\newblock Cones of metrics, hemi-metrics and super-metrics.
\newblock {\em Ann. of European Academy of Sci.} 1:146--162, 2003.

\bibitem{Deza2000}
M.~M.~Deza and I.~G.~Rosenberg.
\newblock $n$-semimetrics.
\newblock {\em European Journal of Combinatorics}, 21:97--806, 2000.

\bibitem{Dhage1992}
B.~C.~Dhage.
\newblock Generalised metric spaces and mappings with fixed point.
\newblock {\em Bulletin of the Calcutta Mathematical Society} 84:329--336, 1992.

\bibitem{Dhage1994}
B.~C.~Dhage.
\newblock On generalized metric spaces and topological structure. II.
\newblock {\em Pure and Applied Mathematika Sciences} 40(1-2):37--41, 1994.

\bibitem{Dhage1994bis}
B.~C.~Dhage.
\newblock On the continuity of mappings in $D$-metric spaces.
\newblock {\em Bulletin of the Calcutta Mathematical Society} 86:503--508, 1994.

\bibitem{Dhage2000}
B.~C.~Dhage.
\newblock Generalized metric spaces and topological structure. I.
\newblock {\em Analele \c{S}tiin\c{t}ifice ale Universit\u{a}\c{t}ii ``Al. I. Cuza'' din Ia\c{s}i} 46(1):31--38, 2000.

\bibitem{Drezner2002}
Z.~Drezner, K.~Klamroth, A.~Sch\"obel, and G.~O.~Wesolowsky.
\newblock The Weber Problem.
\newblock In: {\em Facility Location, Application and Theory}. Z.~Drezner, and H.~W.~Hamacher, Editors. Springer-Verlag Berlin, pp 1--24, 2002.

\bibitem{Frechet1906}
M.~Fr\'echet.
\newblock Sur quelques points de calcul fonctionnel.
\newblock {\em Rendiconti del Circolo Matematico di Palermo } 22:1--72, 1906.

\bibitem{Hausdorff1914}
F.~Hausdorff.
\newblock {\em Grundz\"uge der Mengenlehre}.
\newblock Veit and Company, Leipzig, 1914.

\bibitem{JahKupMarRic15}
T.~Jahn, Y.~S.~Kupitz, H.~Martini, C.~Richter.
\newblock Minsum location extended to gauges and to convex sets.
\newblock {\em J. Optim. Theory Appl.} 166(3):711--746, 2015.

\bibitem{Khamsi2015}
M.~A.~Khamsi.
\newblock Generalized metric spaces: A survey.
\newblock {\em Journal of Fixed Point Theory and Applications}, 17:455--475, 2015.

\bibitem{KisMarTeh16}
G.~Kiss, J.-L.~Marichal, and B.~Teheux.
\newblock An extension of the concept of distance as functions of several variables.
\newblock {\em Proc. 36th Linz Seminar on Fuzzy Set Theory} (LINZ 2016), Linz, Austria, Feb.\ 2-6, pp. 53-56, 2016.

\bibitem{MarMay10}
J.~Mart\'{\i}n and G.~Mayor.
\newblock Some properties of multi–argument distances and Fermat multidistance.
\newblock In \emph{Information Processing and Management of Uncertainty in Knowledge-Based Systems}. Theory and Methods. 13th Int. Conf., IPMU 2010, Dortmund, Germany, June 28-July 2, 2010. Proceedings, Part I. Volume 80 of the series Communications in Computer and Information Science.
Springer, pp. 703--711, 2010.

\bibitem{MarMay11}
J.~Mart\'{\i}n and G.~Mayor.
\newblock Multi-argument distances.
\newblock {\em Fuzzy Sets and Systems} 167:92--100, 2011.

\bibitem{MarMayVal11}
J.~Mart\'{\i}n, G.~Mayor, and O.~Valero.
\newblock Functionally expressible multidistances.
\newblock In: {\em EUSFLAT-LFA Conf.} July 2011, Aix-les-Bains, France. Series: {\em Advances in Intelligent Systems Research}. Atlantis Press, pp.\ 41--46, 2011.

\bibitem{Megiddo1983}
N.~Megiddo.
\newblock Linear-time algorithms for linear programming in $\mathbb{R}^3$ and related problems.
\newblock {\em SIAM Journal on Computing} 12(4), 759--776, 1983.

\bibitem{Megiddo1984}
N.~Megiddo.
\newblock Linear programming in linear time when the dimension is fixed.
\newblock{\em Journal of the ACM} 31(1):114--127, 1984.

\bibitem{Mustafa2004}
Z.~Mustafa and B.~Sim.
\newblock Some remarks concerning $D$-metric spaces.
\newblock In: {\em Proceedings of the International Conference on Fixed Point Theory and Applications} (Valencia, Spain, 2003), Yokohama Publ., Yokohama, 2001, 189--198.

\bibitem{Mustafa2006}
Z.~Mustafa and B.~Sims.
\newblock A new approach to generalized metric spaces.
\newblock {\em Journal of Nonlinear and Convex Analysis}, 7(2): 289--297, 2006.

\bibitem{Ricci2016}
R.~G.~Ricci.
\newblock University of Ferrara, Italy.
\newblock Personal communication, 2016.

\bibitem{Welzl1991}
E.~Welzl.
\newblock Smallest enclosing disks (balls and ellipsoids).
\newblock In: \emph{New Results and New Trends in Computer Science}. Lecture Notes in Computer Science vol. 555, Springer, pp. 359--370, 1991.

\end{thebibliography}
\end{document}